\documentclass[12pt]{amsart}
\usepackage{amsfonts, amsmath, amssymb,amsthm}
\usepackage{amsfonts}
\usepackage{enumitem}
\usepackage[a4paper]{geometry}
\usepackage{mathtools}
\usepackage[utf8]{inputenc}

\usepackage{algorithm}
\usepackage{algorithmic}

\usepackage{longtable}
\usepackage{booktabs}
\usepackage{listings}

\usepackage{color}

\newtheorem{theorem}{Theorem}[section]

\newtheorem{question}[theorem]{Question}
\newtheorem{conjecture}[theorem]{Conjecture}

\newtheorem{proposition}[theorem]{Proposition}

\theoremstyle{definition}
\newtheorem{example}[theorem]{Example}

\newtheorem{definition}[theorem]{Definition}

\newtheorem{remark}[theorem]{Remark}
\newtheorem{remarks}[theorem]{Remarks}

\newcommand{\half}{\frac{1}{2}}

\newcommand{\TT}{\mathbb{T}}

\newcommand{\RR}{\mathbb{R}}
\newcommand{\ZZ}{\mathbb{Z}}
\newcommand{\QQ}{\mathbb{Q}}
\newcommand{\Projn}{\Ran / \RR \One}
\newcommand{\Rn}{\RR^{n}}
\newcommand{\Zn}{\ZZ^{n}}
\newcommand{\Qn}{\QQ^{n}}
\newcommand{\Ran}{\RR^{n+1}}

\newcommand{\ta}{\oplus}
\newcommand{\tm}{\odot}

\newcommand{\eqdim}[1]{\mathrm{e}_{\mathrm{tr}}(#1)}

\newcommand{\varx}{{\bf x}}
\newcommand{\vary}{{\bf y}}

\newcommand{\varu}{{\bf u}}
\newcommand{\varv}{{\bf v}}
\newcommand{\varw}{{\bf w}}
\newcommand{\varp}{{\bf p}}
\newcommand{\varq}{{\bf q}}
\newcommand{\varc}{{\bf c}}
\newcommand{\One}{{\bf 1}}
\newcommand{\Zero}{{\bf 0}}

\newcommand{\tnorm}[1]{\parallel #1 \parallel_{\mathrm{tr}} \,}

\newcommand{\Bn}{B_{\mathrm{tr}}^n}

\newcommand{\BRn}{B_{\mathrm{tr},R}^n}

\newcommand{\San}{S_{\mathrm{tr}}^{n-1}}

\newcommand{\tchrnum}[1]{\chi_{\mathrm{tr}}(#1)}

\newcommand{\norm}[1]{\parallel #1 \parallel}
\newcommand{\abs}[1]{\lvert #1 \rvert}

\numberwithin{equation}{section}

\usepackage{pgfplots}
\pgfplotsset{compat = newest}
\usepgfplotslibrary{fillbetween}

\usepackage{calc}
\usepackage{tikz}
\usetikzlibrary{decorations.markings}
\tikzstyle{vertex}=[circle, draw, inner sep=0pt, minimum size=6pt]

\usepackage{graphicx}
\graphicspath{ {./images/} }
\usetikzlibrary{shapes.geometric}
\usetikzlibrary{shapes.geometric}
\usetikzlibrary{arrows}

\begin{document}

\title[]{On the chromatic number and equilateral dimension of $\mathbb{R}^n$ with the tropical norm}
\author{Amnon Rosenmann}
\email[]{rosenmann@math.tugraz.at}
\date{}

\begin{abstract}
We study the tropical chromatic number of $\mathbb{R}^n$, $\chi_{\mathrm{tr}}(\mathbb{R}^n)$, the minimal number of colors needed to color $\RR^n$, so that no two points at tropical unit distance have the same color. It is the tropical analogue of the well-known Hadwiger-Nelson problem in $\mathbb{R}^2$.
We have $\displaystyle \binom{n+1}{\lfloor (n+1)/2 \rfloor} \leq \tchrnum{\Rn} \leq 2^n$ for every $n$, where the lower bound comes from Sperner's antichain bound on a maximal equilateral set, as shown by Swanepoel.  
It is conjectured that $\chi_{\mathrm{tr}}(\mathbb{R}^n) = 2^n$, which is known to be the case for the measurable chromatic number.
By constructing a graph with 62 vertices and 577 edges we demonstrate that 
$\chi_{\mathrm{tr}}(\mathbb{R}^3)=8$.
We also construct a graph in $\ZZ^4$ with 37 vertices and 386 edges that is 11-colorable but not 10-colorable, which is above Sperner's lower bound of 10. 
\end{abstract}
\subjclass[2020]{05C15,14T99, 05C12, 15A80, 52C10, 51K99, 52A20, 05D05, 06A07}
\keywords{Hadwiger-Nelson problem, tropical chromatic number, tropical equilateral dimension, Sperner's antichain theorem}
\maketitle

\section{Introduction}
\label{sec:intro}
The well-known Hadwiger–Nelson problem, or chromatic number of the plane, asks for the minimal number of colors needed to color $\RR^2$ so that no two points at unit distance have the same color.
An upper bound of seven colors was shown by Isbell in 1950 (see \cite{Soi09}, p. 29).
A lower bound of four was given by the Moser brothers in 1961, constructing a graph of seven vertices, known as the Moser spindle \cite{MM61}. A much more complicated graph of 1581 vertices was constructed by de Grey \cite{deG18} in 2018, which requires five colors. A Polymath project was assigned to reduce the number of vertices of a graph requiring five colors and, as far as we know, the current minimal example is a graph of 509 vertices constructed by Parts in 2020 \cite{Paarts20}.
It follows that the chromatic number of the plane is 5,6 or 7.

The chromatic number problem was studied in $\Rn$ with respect to different norms. It is a problem about infinite graphs but by the well-known theorem of de Bruijn and Erd\"os \cite{deBE51} there exists a finite subgraph with the same chromatic number (assuming the axiom of choice).
A lower and upper bounds for the chromatic number of $\Rn$ with the Euclidean norm are $\chi(\Rn) \geq (1.239... + o(1))^n$ (Raigorodskii \cite{Ra00}) and $\chi(\Rn) \leq (3+o(1))^n$ (Larman and Rogers \cite{LR72}), with better lower bounds for some lower dimensions (see \cite{CKR18}). 
Exponential lower and upper bounds are also known for general $l_p$-norms, for $1 < p < \infty$, and the chromatic number is exactly $2^n$ in the $l_{\infty}$-norm.
When the norm is determined by a centrally symmetric convex polytope then an upper bound of $(4+o(1))^n$ was obtained by Kupavskiy \cite{Kup11}.

In this work we study the chromatic number of $\Rn$ with respect to the tropical metric, denoted $\tchrnum{\Rn}$. This metric, an additive version of Hilbert projective metric, was introduced by Cohen, Gaubert and Quadrat \cite{CGQ04} (see also \cite{Pue14}).

The closed unit ball with respect to the tropical metric is convex, centrally symmetric and tiles $\Rn$ face-to-face. For such a type of a norm, Bachoc, Bellitto, Moustrou and P\^echer gave an upper bound of $2^n$ for the chromatic number \cite{BBMP19}.
But their proof refers to parallelohedra $P$ which tile $\Rn$ by disjoint copies of the interior of $P$, ignoring the boundary. In the case of the tropical metric it is not enough that $\Rn$ is tiled by disjoint copies of half open unit balls but we have to make sure that opposite facets are of different colors, so we show in detail how this can be achieved (Section~\ref{sec:chromatic_number_Rn}). 
Asymptotically we have a lower bound $\displaystyle \chi_{\mathrm{tr}}(\mathbb{R}^n) \geq \Omega\!\left(\frac{2^{n}}{\sqrt{n}}\right)$. 
Actually, $\displaystyle \binom{n+1}{\lfloor (n+1)/2 \rfloor} \leq \tchrnum{\Rn} \leq 2^n$ for every $n$, where the lower bound was shown by Swanepoel \cite{Swan07}. We conjecture that $\tchrnum{\Rn} = 2^n$, similar in spirit to a conjecture of Bachoc and Robins (see \cite{BBMP19}).
In fact, Bachoc et al. \cite{BBMP19} show that the tropical measurable chromatic number of $\Rn$ is exactly $2^n$. The term \emph{measurable} refers to the requirement that each color is assigned a measurable set. By the way, the word {\it tropical} is not mentioned in \cite{BBMP19} when using this norm. Note also the minor difference that in \cite{BBMP19} they project $\RR^{n+1}/\RR\One$ to the hyperplane $\One^{\bot}$ whereas we project it to $\Rn$.

In $\RR^2$ we construct the tropical analogue of the Moser spindle, which requires four colors (Section~\ref{sec:chromatic_number_R2}).
In $\RR^3$ the tropical chromatic number $8$ is obtained via a graph with 62 vertices and 577 edges that is 8-colorable but not 7-colorable (Section~\ref{sec:chromatic_number_R3}). The vertices and edges of the graph are presented in Appendix~\ref{sec:Appendix B}.
We also demonstrate in Appendix~\ref{sec:Appendix A} a graph in $\ZZ^4$ with 37 vertices and 386 edges that is 11-colorable but not 10-colorable, that is above the lower bound of $\displaystyle \binom{4+1}{\lfloor (4+1)/2 \rfloor} = 10$. 

A problem related to the chromatic number is the equilateral dimension: the maximal cardinality of a set consisting of points of the same distance from one another. This number is, of course, a lower bound to the chromatic number. The equilateral dimension of $\Rn$ with respect to the Euclidean metric, $\mathrm{e}_{l_2}(\Rn)$, is $n + 1$, which is much smaller than the chromatic number, which grows exponentially with $n$,
whereas in the $l_\infty$-norm we have $\mathrm{e}_{l_\infty}(\Rn) = 2^n$ .
For $1 < p < \infty$ we have $\mathrm{e}_{l_p}(\Rn) \geq n+1$ and $\mathrm{e}_{l_1}(\Rn) \geq 2n$ . Kusner \cite{Guy83} conjectured that these two bounds are equalities.
Alon and Pudl\'ak \cite{AP03} showed that for every odd   integral $p$, $\mathrm{e}_{l_p}(\Rn) \leq c_p n \log n$, for some constant $c_p > 0$. 

Petty \cite{Petty71} showed that the equilateral dimension is bounded above by $2^n$ for $\Rn$ equipped with an arbitrary norm and conjectured that a lower bound is $\min \{4,n+1\}$.
Swanepoel and Villa \cite{SV08}, based on the works of Alon and Milman \cite{AM83} and others, showed that the equilateral dimension of every $n$-dimensional normed space is at least $\exp(c \sqrt{\log n})$, for some $c>0$.

We show here that for every $n$ there is an equilateral set of size $\displaystyle \binom{n+1}{\lfloor (n+1)/2 \rfloor}$ in $\Zn$ with respect to the tropical metric (Section~\ref{sec:equilateral}). We then conjecture that this lower bound is the tropical equilateral dimension of $\Rn$ and we show that this conjecture holds in dimensions 2 and 3 and also when the points belong to the tropical $(n-1)$-sphere of radius $R$ and are of tropical distance $2R$ from one another.
After uploading to the arXiv the first version of this article we were informed that the above results concerning the tropical equilateral dimension already appeared in
Swanepoel \cite{Swan07}
(but not under the name {\it tropical}).

For a background material we review after this introduction the tropical metric and the $n$-dimensional tropical unit ball (Section~\ref{sec:background}).

\section{Background}
\label{sec:background}
\subsection{The tropical metric}
The (min) tropical semifield (or min-plus algebra) ($\TT,\ta, \tm$) consists of the set $\TT = \RR \cup {\infty}$ equipped with the operations of tropical addition $\ta$ and tropical multiplication $\tm$, defined by
\begin{equation}
	a \ta b := \min(a,b) , \qquad a \tm b := a+b.
\end{equation}
The identity element for addition is $\infty$ and the identity element for multiplication is $0$ (see \cite{But10}, \cite{MS15}).
A tropical metric that is an additive version of Hilbert projective metric was introduced in \cite{CGQ04} by Cohen, Gaubert and Quadrat. It is direction-dependent: the tropical distance between two points is invariant to translations but not to rotations.
Let $\TT^{\times} = \TT~\setminus~\{\infty\} = \RR$ and let $\Projn$ be the $n$-dimensional tropical projective torus, where $\One = (1,\ldots,1)$, that is, for all $a \in \RR$, $(x_1,\ldots,x_{n+1}) \sim (x_1+a,\ldots,x_{n+1}+a)$.
For each element of $\Projn$, written in homogeneous coordinates as $(x_1 : \ldots : x_{n+1})$, we can choose as a representative the element $(x_1 - x_{n+1} : \ldots : x_n - x_{n+1}:0)$, and then map it bijectively to the element $(x_1 - x_{n+1}, \ldots, x_n - x_{n+1})$ of $\Rn$.

The tropical distance (see \cite{CGQ04}) in $\Projn$ between $\varx = (x_1 : \ldots : x_{n+1})$ and $\vary = (y_1 : \ldots : y_{n+1})$ is
\begin{equation}
	\label{eq:dist}
	d_{\mathrm{tr}}(\varx, \vary) := \max_{1 \leq i,j \leq n+1} \{x_i - y_i -x_j + y_j\}.
\end{equation}
In $\Rn$ the tropical distance between $\varx=(x_1,\ldots,x_n)$ and $\vary=(y_1,\ldots,y_n)$ is
\begin{align*}
	d_{\mathrm{tr}}(\varx, \vary) &:= \max\{\max_{1 \leq i \leq n} \{\abs{x_i - y_i}\}, \max_{1 \leq i,j \leq n} \{x_i - y_i -x_j + y_j\}\} \\
	&= \max\{\max_{1 \leq i \leq n} \{x_i - y_i\},0\}
	- \min\{ \min_{1 \leq i \leq n} \{ x_i - y_i\}, 0\}. \nonumber
\end{align*}
 
\subsection{The tropical unit ball}
The $n$-dimensional tropical (closed) ball of radius $R$ with center at $\varc \in \Rn$ is
\begin{equation}
	\BRn(\varc) := \{\varx \in {\Rn} : d_{\mathrm{tr}}(\varc, \varx) \leq R \}
\end{equation}
(see e.g. \cite{CJS22}, \cite{Ros26}).
We denote by $\BRn$ the tropical ball of radius $R$ that is centered at the origin and when in addition the radius is 1 then it is denoted by $\Bn$.
The tropical ball is a polytope which is a standard convex set as well as a tropically convex and a tropically geodesic set.
(see Figure~\ref{fig:tr_balls}).
The tropical unit ball that is centered at the origin is centrally symmetric. It consists of the set of points $\varx = (x_1, \ldots, x_n)$ that satisfy these two conditions:
\begin{enumerate}[label=(\roman*)]
	\item $\norm{\varx} \; \leq 1$ (here $\norm{\cdot}$ is the standard Euclidean norm);
	\item $\displaystyle{\max_i \{x_i\} - \min_j \{x_j\} \leq 1}$.
\end{enumerate}
Equivalently, fixing a dummy variable $x_0=0$, then
\begin{equation}
	\label{eq:tr_unit_ball}
	\Bn = \{(x_1, \ldots,  x_n) : \vert x_i-x_j \vert \leq 1, \, 0 \leq i,j \leq n \} \}.
\end{equation}
\noindent
The $2^{n+1}-2$ vertices of $\Bn$ are:
\begin{enumerate}[label=(\roman*)]
	\item $\{(x_1, \ldots, x_n) \in \{0,1\}^n \} \setminus\{\Zero\}$;
	\item $\{(x_1, \ldots, x_n) \in \{0,-1\}^n \} \setminus\{\Zero\}$.
\end{enumerate}
Here $\Zero = (0, \ldots, 0)$. \\
The $n(n+1)$ facets (of dimension $n-1$) of $\Bn$ are:
\begin{enumerate}[label=(\roman*)]
	\item $F_i = \{(x_1, \ldots, x_n) : x_i =  1, 0 \leq x_j \leq 1, j \neq i\}$, for $i=1,\ldots,n$;
	\item $F_{-i}=\{(x_1, \ldots, x_n) : x_i =  -1, -1 \leq x_j \leq 0, j \neq i\}$, for $i=1,\ldots,n$;
	\item $F_{ij}=\{(x_1, \ldots, x_n) : 0 \leq x_i \leq 1, x_j = x_i-1, x_j \leq x_k \leq x_i,  k=1,\ldots,n, k \neq i,j\}$, for $i,j=1,\ldots,n, i \neq j$ (when $n>1$).
\end{enumerate}
We have, $F_{-i}=-F_i$ and $F_{ji}=-F_{ij}$.
The union of the above facets is the tropical unit sphere $\San$.

\begin{figure}[h]
	\centering
	\begin{tikzpicture}
		\begin{axis}[
			axis x line=middle,
			axis y line=middle,
			grid = major,
			width=7cm,
			height=7 cm,
			grid style={dashed, gray!80},
			xmin=-2.0,     
			xmax= 2.0,    
			ymin= -2.0,     
			ymax= 2.0,   
			xlabel=$x$,
			ylabel=$y$,
			/pgfplots/xtick={-1.0, 0.0, 1.0}, 
			/pgfplots/ytick={-1.0, 0.0, 1.0}, 
			]
			\draw[thick,black] (1.0,0.0) -- (1.0,1.0) -- (0.0,1.0) -- (-1.0,0.0) -- (-1.0,-1.0) -- (0.0,-1.0) -- cycle [fill=blue!20, opacity=0.3];
			\draw[thick,black] (1.0,0.0) -- (1.0,1.0) -- (0.0,1.0) -- (-1.0,0.0) -- (-1.0,-1.0) -- (0.0,-1.0) -- cycle;
		\end{axis}
	\end{tikzpicture}
	\qquad
	\begin{tikzpicture}
		\begin{axis}[
			view={50}{10},
			axis x line=middle,
			axis y line=middle,
			axis z line=middle,
			grid = major,
			width=9cm,
			height=9cm,
			grid style={dashed, gray !80},
			xmin=-2,     
			xmax= 2,    
			ymin= -2,     
			ymax= 2,   
			zmin= -2,     
			zmax= 2,   
			xlabel=$x$,
			ylabel=$y$,
			zlabel=$z$,
			/pgfplots/xtick={-1.0, 0.0, 1.0}, 
			/pgfplots/ytick={-1.0, 0.0, 1.0}, 
			/pgfplots/ztick={-1.0, 0.0, 1.0}, 
			]
			\coordinate (A1) at (1,0,0);
			\coordinate (A2) at (1,1,0);
			\coordinate (A3) at (0,1,0);
			\coordinate (A4) at (0,0,1);
			\coordinate (A5) at (1,0,1);
			\coordinate (A6) at (1,1,1);
			\coordinate (A7) at (0,1,1);

			\coordinate (B1) at (-1,0,0);
			\coordinate (B2) at (-1,-1,0);
			\coordinate (B3) at (0,-1,0);
			\coordinate (B4) at (0,0,-1);
			\coordinate (B5) at (-1,0,-1);
			\coordinate (B6) at (-1,-1,-1);
			\coordinate (B7) at (0,-1,-1);

			\draw [thick] (A1) -- (A2) -- (A6) -- (A5) -- cycle;
			\draw (A3) -- (A2) -- (A6) -- (A7) -- cycle;
			\draw [thick] (A4) -- (A5) -- (A6) -- (A7) -- cycle;
			\draw (B1) -- (B2) -- (B6) -- (B5) -- cycle;
			\draw [thick] (B3) -- (B2) -- (B6) -- (B7) -- cycle;
			\draw (B4) -- (B5) -- (B6) -- (B7) -- cycle;
			\draw (A7) -- (B1);
			\draw [thick] (A4) -- (B2);
			\draw (A3) -- (B5);
			\draw [thick] (A5) -- (B3);
			\draw (A2) -- (B4);
			\draw [thick] (A1) -- (B7);
			\draw [thick] (B7) -- (B4) -- (A2);
			\draw [thick] (A2) -- (B4) -- (B7) -- (B6) -- (B2) -- (A4) -- (A7) -- (A6) -- cycle [fill=blue!20, opacity=0.3];
		\end{axis}
	\end{tikzpicture};
	\caption{The tropical unit ball in the plane (an hexagon, left) and in space (a rhombic dodecahedron with four-sided faces, right)}
	\label{fig:tr_balls}
\end{figure}

In a tropical $n$-dimensional ball $\BRn$ of radius $R$, two points $\varp$ and $\varq$ are of tropical distance $2R$ from each other, that is, antipodal points, if and only if there is a tropical geodesic joining them that passes through the center of $\BRn$, and this happens if and only if $\varp$ and $\varq$ belong to opposite facets of $\BRn$: $\varp \in F_{i}$ and $\varq \in F_{-i}$ or $\varp \in F_{ij}$ and $\varq \in F_{ji}$. It means that, when $n>1$, given a point $\varp \in \partial \BRn$, there are infinitely many points $\varq \in \partial \BRn$, such that $d_{\mathrm{tr}}(\varp, \varq) = 2R$, the tropical diameter of $\BRn$.

The collection of $n$-dimensional tropical unit balls $\Bn({\varc})$ with centers at the lattice
\begin{equation}
	\Lambda = \{ {\varc} = (c_1,\ldots,c_n) \in\Zn : \sum_{i=1}^{n} c_i \equiv 0 \pmod {n+1} \}
	\label{eq:center} 
\end{equation}
forms a honeycomb of $\Rn$ (see \cite{Ros26}).
The set of vectors $\{\varu_1,\ldots,\varu_n\}$, where $\varu_i = {\bf e_i} + \One$, ${\bf e_i}$ are the standard unit vectors and $\One = (1,\ldots,1) \in \Rn$,
forms a basis for $\Lambda$ over $\ZZ$ in which
\begin{equation}
	\label{eq:Z-basis}
	{\varc} = (c_1,\ldots,c_n) = \sum_{i=1}^{n} \left(c_i-\frac{s}{n+1}\right) \varu_i,
\end{equation}
where $ s=\sum_{i=1}^{n} c_i$.

\section{The tropical chromatic number of $\Rn$}
\label{sec:chromatic_number_Rn}
\begin{definition}
	The {\it tropical chromatic number} of $\Rn$, $\tchrnum{\Rn}$, is the minimal number of colors needed to apply to points of $\Rn$ in such a way that no two points of tropical unit distance from one another have the same color.
\end{definition}
The unit ball with respect to the tropical norm is a parallelohedron and for such a norm Bachoc, Bellitto, Moustrou and P\^echer \cite{BBMP19} gave an upper bound of $2^n$ for the chromatic number. But their proof refers to open parallelohedra $P$ which tile $\Rn$ by disjoint copies of $P$. In the case of the tropical metric it is not enough that $\Rn$ is tiled by disjoint copies of half open unit balls but we have to make sure that opposite facets are assigned different colors.
So, although it is not difficult to satisfy this condition, we show here in detail one way of achieving it.
The colors of the inner tiles are assigned as in \cite{BBMP19}.

Let $\prec$ be the following ``reverse lexicographic order'', a total order on $\Rn$. Given $\varx = (x_1,\ldots,x_n) \neq \vary = (y_1,\ldots,y_n)$ then
$\varx \prec \vary$ if there exists $0 \leq k \leq n-1$, such that $x_n=y_n,
\ldots, x_{n-k+1}=y_{n-k+1}$ and $x_{n-k}<y_{n-k}$. Otherwise, $\vary \prec \varx$.

\begin{theorem}
	\label{thm:$2^n$-colorable}
	$\Rn$ is $2^n$-colorable with respect to the tropical metric.
\end{theorem}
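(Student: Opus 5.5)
The plan is to color by a half-open tiling at half scale, colored by cosets of a sublattice of index $2^n$, and to use the order $\prec$ to decide which tile owns each boundary point.

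\emph{Tiling and coloring.} Scale the honeycomb of Section~\ref{sec:background} by $\tfrac12$. The balls $B^n_{\mathrm{tr},1/2}(\varc)$ with $\varc\in\tfrac12\Lambda$ then tile $\Rn$ face-to-face, and each tile has tropical diameter exactly $1$. Each point of $\Rn$ is assigned to a unique tile: among all tiles containing a point $\varx$, choose the one whose center is $\prec$-largest. Since $\prec$ is translation invariant, this gives every tile the same half-open shape. Color each tile by the class of its center in $\tfrac12\Lambda/\Lambda\cong(\ZZ/2\ZZ)^n$, which uses $2^n$ colors.

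\emph{Points in the same tile.} Two distinct points of one tile are at distance at most $1$. Equality holds only when they lie on opposite facets $F$ and $-F$ of that tile, by the antipodality characterization in Section~\ref{sec:background}. The tile lying across $F$ has center $\varc+\varv$, and the tile across $-F$ has center $\varc-\varv$, for some $\varv\in\tfrac12\Lambda$. Exactly one of $\varv\prec\Zero$ and $-\varv\prec\Zero$ holds, because $\prec$ is a translation-invariant total order. If $\varv\prec\Zero$, the points of $-F$ (apart from those also on $F$) are owned by the tile with center $\varc-\varv\succ\varc$, so they are not in our tile. If $-\varv\prec\Zero$, the same argument removes $F$. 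Hence no two points of one tile are at unit distance.

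\emph{Points in different tiles of the same color.} Let $\varp$ lie in the tile with center $\varc_1$ and $\varq$ in the tile with center $\varc_2$, with $\varc_2-\varc_1=2\varw$ and $\Zero\neq\varw\in\tfrac12\Lambda$. Every nonzero vector of $\tfrac12\Lambda$ has tropical norm at least $1$, so $d_{\mathrm{tr}}(\varc_1,\varc_2)\ge 2$. The triangle inequality then gives
\[
d_{\mathrm{tr}}(\varp,\varq)\ge 2-\tfrac12-\tfrac12=1 .
\]
Equality forces four things: $d_{\mathrm{tr}}(\Zero,\varw)=1$, so $\varw$ is a nearest-neighbor vector and the tiles at $\varc_1$ and $\varc_1+\varw$ share a facet; $\varp$ and $\varq$ lie on the boundaries of their tiles; $\varc_1,\varp,\varq,\varc_2$ lie on one tropical geodesic; and $\varp-\varc_1$ and $\varc_2-\varq$ point in the sign-pattern direction of $\varw$. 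From this I intend to deduce that $\varp$ lies on the facet of its tile facing $\varc_1+\varw$, and that $\varq$ lies on the facet of its tile facing $\varc_2-\varw$. Ownership by the $\prec$-largest center then gives $\varc_1\succ\varc_1+\varw$, that is $\varw\prec\Zero$, and also $\varc_2\succ\varc_2-\varw$, that is $-\varw\prec\Zero$. These two conclusions contradict each other.

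\emph{Main obstacle.} The hard step is the equality case: turning the equality $d_{\mathrm{tr}}(\varp,\varq)=1$ into the precise facet statement. I would write $\varw$ in the coordinates with dummy $x_0=0$. The tropical norm equals $\max_i w_i-\min_j w_j$, so equality in the triangle inequality forces $\varp-\varc_1$ to attain its maximum and minimum coordinates at indices where $\varw$ does. I would then check that this places $\varp$ in the facet $F_i$ or $F_{ij}$ that is half of $\varw$'s direction, using the explicit facet list. Points on lower-dimensional faces, which lie in several facets at once, must be handled by showing that the relevant neighbor $\varc_1+\varw$ still contains $\varp$, and that is enough for the ownership argument.
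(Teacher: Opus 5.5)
Your strategy works and takes a different route from the paper's. However, one claim in it is false for $n\ge 3$, and the key step has to be stated differently.

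\textbf{The false claim.} A vector $\varw\in\tfrac12\Lambda$ of tropical norm $1$ need not correspond to a facet. In homogeneous coordinates $2\varw$ can be any $e_S-e_T$ with $|S|=|T|$, and only $|S|=1$ gives facet neighbours. For $n=3$, $\varw=(1,1,0)$ has norm $1$, yet $B^3_{\mathrm{tr},1/2}(\Zero)\cap B^3_{\mathrm{tr},1/2}(\varw)=\{(\tfrac12,\tfrac12,0)\}$ is a single vertex. So you cannot conclude that $\varp$ lies on ``the facet facing $\varc_1+\varw$''.

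\textbf{The fix.} What you need, and what holds for every such $\varw$, is $\varp,\varq\in B^n_{\mathrm{tr},1/2}(\varc_1+\varw)$.
\begin{itemize}
\item Put $a=\varp-\varc_1$, $b=\varq-\varp$, $c=\varc_2-\varq$, and append the dummy coordinate $0$, so the norm is $\max_k x_k-\min_k x_k$.
\item Let $i,j$ be indices where $2\varw=a+b+c$ is maximal, respectively minimal. Then $2\le 2w_i-2w_j=(a_i-a_j)+(b_i-b_j)+(c_i-c_j)\le\tfrac12+1+\tfrac12$.
\item This forces $j$ to minimize $a$, $b$ and $c$ simultaneously. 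Hence for every $k$
\[
(a_k-a_j)+(b_k-b_j)+(c_k-c_j)=2w_k-2w_j\in\{0,1,2\},
\]
using $2\varw\in\Lambda\subseteq\ZZ^n$, with the three summands lying in $[0,\tfrac12]$, $[0,1]$ and $[0,\tfrac12]$.
\item Consequently $(a_k-a_j)-(w_k-w_j)\in[-\tfrac12,0]$ for all $k$: it equals $0$ when the sum is $0$, $-\tfrac12$ when the sum is $2$, and lies in $[-\tfrac12,0]$ otherwise. That is, $d_{\mathrm{tr}}(\varp,\varc_1+\varw)\le\tfrac12$.
\item Symmetrically $d_{\mathrm{tr}}(\varq,\varc_2-\varw)\le\tfrac12$, and $\varc_2-\varw=\varc_1+\varw$.
\end{itemize}
Your sketch (common extremal indices) is the right start. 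But you need $a$ to be extremal at \emph{every} index where $2\varw$ is, and that comes from the integrality of $2\varw$.

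\textbf{Comparison with the paper.} With this lemma your proof is complete. The paper uses the same tiling, the same colouring by $\Lambda/2\Lambda$ and the same $\prec$-ownership rule. It also passes to the midpoint cell $\Bn(\varc'')$, asserting in one sentence that $\varp,\varp'$ are antipodal points of it; the computation above is the justification that sentence needs. The paper then shows that antipodal boundary points of a cell get different colours. It does this by explicitly determining the $\prec$-largest owning centre for points on $F_{\pm i}$ and $F_{ij}$ (Cases 1--3, via the index sets $S_k,T_k$) and comparing parities of coordinates in the basis $\varu_i$.

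You replace that whole case analysis with the antisymmetry of $\prec$. The owners $\varc_1,\varc_2$ are symmetric about the midpoint centre, so they cannot both be $\succ$ it. The same-tile case is the identical argument applied to the neighbours $\varc\pm\varv$ across $\pm F$.

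Your version is shorter and avoids the paper's ``one can check'' steps. It uses only that $\prec$ is a translation-invariant total order, so it transfers to other lattice tilings with the midpoint-containment property. The paper's computation gives, in exchange, an explicit description of which cell owns each boundary point.
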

\begin{proof}
	The assertion is clearly true for $n=1$, so assume that $n \geq 2$.
	We construct a honeycomb of $\Rn$ with cells that are tropical balls of radius 1 to form a coloring with respect to tropical distance 2 (it enables us to work over $\ZZ$ instead of $\half \ZZ$ and by rescaling by half at the end we obtain a coloring for tropical distance 1).
	The centers of the cells form a lattice $\Lambda = \{ {\varc} = (c_1,\ldots,c_n) \in\Zn : \sum_{i=1}^{n} c_i \equiv 0 \pmod {n+1} \}$
	with a $\ZZ$-basis $\mathcal{B}$ consisting of the vectors $\varu _i = {\bf e_i}+\One$ of tropical norm 2, as in \eqref{eq:Z-basis}.
	
	Each point of $\Rn$ is given a color indexed by $(i_1,\ldots,i_n) \in \{0,1\}^n$ in the following way.
	The center ${\varc} = \sum_{i=1}^{n} m_i \varu_i$ of the cell $\Bn({\varc})$ is assigned a color representing its element in the quotient group $\Lambda / 2\Lambda$:
	\begin{equation}
		\label{eq:center_color}
		\chi(\varc) = (m_1 (\mathrm{mod}~2), \ldots, m_n (\mathrm{mod}~2)).
	\end{equation}
	The interior of the cell $\Bn(\varc)$ is assigned the same color $\chi({\varc})$.
	Let $\varx$ be  a boundary point of the cells with centers $\varc_i$, for $i=1,\ldots,r$.
	Let $\varw_i=\varx-\varc_i$. Then
	$$
	\chi(\varx) = \chi(\varc_j), \;\mbox{where}\; \varw_j = \min_{\prec} \{\varw_1,\ldots,\varw_r\},
	$$
	that is, the minimum is w.r.t. the total order $\prec$.
	
	Let $\varp, \varp' \in \Rn$ with $d_{\mathrm{tr}}(\varp, \varp') = 2$. We need to show that $\chi(\varp) \neq \chi(\varp')$. Let $\varp \in \Bn(\varc)$ with $\chi(\varp) = \chi(\varc)$ and let $\varp' \in \Bn(\varc')$ with $\chi(\varp') = \chi(\varc')$.
	If $\varc = \varc'$ then $\varp, \varp' \in \partial \Bn(\varc)$ are antipodal points.
	Otherwise, if $\varc \neq \varc'$ then if, in addition, $\chi(\varp) = \chi(\varp')$ then by \eqref{eq:center_color}, $\varc' - \varc = 2\sum_i k_i\varu_i$, for some $k_i \in \ZZ$ and for $\varu_i \in \mathcal{B}$. It follows that $\varc'' = \half(\varc+\varc') = \varc + \sum_i k_i\varu_i$ is the center of the cell $\Bn(\varc'')$ and the (standard) line segment connecting $\varc$ with $\varc'$ (a tropical geodesic, see \cite{Ros26}) passes through $\varc''$.
	This line segment passes through two antipodal points of $\Bn(\varc'')$ of tropical distance 2 from one another, so these points must be $\varp, \varp'$.
	It follows that in order to complete the proof it suffices to show that the antipodal points $\varp$ and $\varp'$ are assigned different colors.
	Since we are not interested in the exact colors but only in showing that $\chi(\varp) \neq \chi(\varp')$, we may assume, w.l.o.g., 
	that $\varp$ and $\varp'$ are boundary points of $\Bn$, the tropical unit ball with center at the origin.
	We look at the two different types of facets of $\Bn$: $F_{\pm i}$ and $F_{ij}$.
	\\ \\
	{\bf Case 1:} $\varp = (p_1,\ldots,p_n) \in F_{-i}$, that is, $-1 \leq p_k \leq 0$, for all $k$, and $p_i = -1$. 
	Let
	\begin{align*}
		S_{1} &= \{ k \in [1..n] \;:\; p_k = -1 \}, \\
		S_{2} &= \{ k \in [1..n] \;:\; p_k = 0 \}, \\
		S_{3} &= \{ k \in [1..n] \;:\; -1 < p_k < 0 \},
	\end{align*}
	where $i \in S_{1}$ and $\vert S_{1} \vert + \vert S_{2} \vert + \vert S_{3} \vert = n$.
	The point $\varp$ is a boundary point of more than one cell, and $\chi(\varp) = \chi(\varc)$, where $\varc = (c_1,\ldots,c_n)$ 
	is the largest, w.r.t. $\prec$, among the centers of these cells.
	One can check that this condition and the fact that $d_{\mathrm{tr}}(\varc, \varp) = 1$ imply that $p_k \leq c_k$, for all $k$.
	It follows that $\sum_{k=1}^{n} c_k = 0$ (for example, $\varc = \Zero$), otherwise, if $\sum_{k=1}^{n} c_k = n+1$, we will have $d_{\mathrm{tr}}(\varc, \varp) > 1$.
	Thus, $c_k=0$, for $k \in S_{3}$, and $c_k = p_k+1$, for $\vert S_{1} \vert$ of the largest (w.r.t. $\prec$) indices in $S_{1} \cup S_{2}$.
	
	Let $\varp' = (p'_1,\ldots,p'_n) \in F_{i}$ be an antipodal point to $\varp$. Let 
	\begin{align*}
		S'_{1} &= \{ k \in [1..n] \;:\; p'_k = 1 \}, \\
		S'_{2} &= \{ k \in [1..n] \;:\; p'_k = 0 \}, \\
		S'_{3} &= \{ k \in [1..n] \;:\; 0 < p'_k < 1 \},
	\end{align*}
	where $i \in S'_{1}$ and $\vert S'_{1} \vert + \vert S'_{2} \vert + \vert S'_{3} \vert = n$.
	Let $\varc' = (c'_1,\ldots,c'_n)$ be the largest, w.r.t. $\prec$, among the centers of the cells for which $\varp'$ is a boundary point,
	and, as above, $p'_k \leq c'_k$, for all $k$.
	Then, we have $c'_k=1$, for $k \in S'_{3}$, and 
	$c_k = p_k+1$, for $\vert S'_{2} \vert + 1$ of the largest (w.r.t. $\prec$) indices in $S'_{1} \cup S'_{2}$, so that $\sum_k c'_k = n+1$.
	
	Let ${\varc} = \sum_{k=1}^{n} m_k \varu_k$ and ${\varc'} = \sum_{k=1}^{n} m'_k \varu_k$.
	Since the sum of the entries of each $\varu_k$ is $n+1$ then $\sum_{k=1}^{n} m_k = 0$ and $\sum_{k=1}^{n} m'_k = 1$,
	which implies by \eqref{eq:center_color} that $\chi(\varp) = \chi(\varc) \neq \chi(\varc') = \chi(\varp')$.
	\\ \\
	{\bf Case 2:} $\varp = (p_1,\ldots,p_n) \in F_{ij}$, where $0 < p_i < 1$, $p_j=p_i-1$, and $p_j \leq p_k \leq p_i$, for $k \neq i,j$. Let also assume, w.l.o.g., that $j>i$.
	We have $\chi(\varp) = \chi(\varc)$, where $\varc = (c_1,\ldots,c_n)$ is the largest, w.r.t. $\prec$, among the centers of the cells that contain $\varp$ as a boundary point. Then $\sum_{k=1}^{n}c_k = 0$.
	One can check that when $\varp \in F_{kl}$ (in addition to $\varp \in F_{ij}$) with respect to a tropical ball with center $\varc''$ then necessarily $p_k=p_i$ and $p_l=p_j$, otherwise one cannot satisfy both $d_{\mathrm{tr}}(\varp, \varc'') = 1$ and $\sum_{i=1}^{n} c''_i \equiv 0 \pmod {n+1}$. 
	Let
	\begin{align*}
		T_{1} &= \{ k \in [1..n] \;:\; p_k = p_i \}, \\
		T_{2} &= \{ k \in [1..n] \;:\; p_k = p_j \}, \\
		T_{3} &= \{ k \in [1..n] \;:\; p_j < p_k < p_i \},
	\end{align*}
	where $\vert T_{1} \vert + \vert T_{2} \vert + \vert T_{3} \vert = n$.
	Then, necessarily, $c_k = 0$, for $k \in T_3$. 
	The other entries of $\varc$, those of index $k \in T_1 \cup T_2$, have values in $\{-1,0,1\}$ in the following way. We arrange these indices by numerical order. We start with the maximal index, say $k$. If $k \in T_2$ then $c_k = 0$. If $k \in T_1$ then $c_k=1$ and $c_l=-1$, where $l$ is the minimal index in $T_2$. We continue in the same way after removing the handled indices from $T_1 \cup T_2$. The process ends when $T_1$ or $T_2$ (or both) are cleared, and then $c_k=0$ for the rest of the indices. 
	At the end, the same number $l \geq 0$ of entries of $\varc$ are of value 1 and of value $-1$.
	
	Let $\varp' = (p'_1,\ldots,p'_n) \in F_{ji}$ be an antipodal point to $\varp$, where $0 < p'_j < 1$, $p'_i=p'_j-1$ and $p'_i \leq p'_k \leq p'_j$, for $k \neq i,j$. Let
	\begin{align*}
		T'_{1} &= \{ k \in [1..n] \;:\; p'_k = p'_j \}, \\
		T'_{2} &= \{ k \in [1..n] \;:\; p'_k = p'_i \}, \\
		T'_{3} &= \{ k \in [1..n] \;:\; p'_i < p'_k < p'_j \},
	\end{align*}
	where $\vert T'_{1} \vert + \vert T'_{2} \vert + \vert T'_{3} \vert = n$.
	Let $\varc' = (c'_1,\ldots,c'_n)$ be the largest, w.r.t. $\prec$, among the centers of the cells for which $\varp'$ is a boundary point.
	Then, as above for $\varc$, $c'_k = 0$, for $k \in T'_3$ and the other entries of $\varc'$, those of index $k \in T'_1 \cup T'_2$ are in $\{-1,0,1\}$ by the same procedure as for $\varc$.
	We get that $\sum_k c'_k = 0$, and since $i \in T'_2$, $j \in T'_1$ and $i<j$ then the process above guarantees that some $l'>0$ entries of $\varc'$ are 1 and $l'$ entries are $-1$.
	
	Since $\varc$ is the sum of, say $m \geq 0$, vectors of the form ${\bf e_k}-{\bf e_l} = \varu_k-\varu_l$ and, similarly, $\varc'$ is the sum of $m' > 0$ such vectors, we get that the coordinates of $\varc$ and $\varc'$ with respect to the basis $\mathcal{B}$ are exactly the entries $c_k$ and $c'_k$. It follows that in order to prove that $\chi(\varc) \neq \chi(\varc')$ we need to show that for some $k \in [1..n]$, $\varc_k$ and $\varc'_k$ are of different parity.
	
	Let us assume that $\chi(\varc) = \chi(\varc')$. We know that at least one entry of $\varc'$ is 1 and at least one is $-1$. Let $k$ be the maximal index for which $c'_k=1$ and $l$ the minimal index for which $c'_l=-1$. Then necessarily $c_k=1$. It cannot be $-1$ since then there would be an index $k'>k$ for which $c_{k'}=1$, which is not possible. Similarly, $c_l=-1$. Continuing with the next maximal index for which the entry of $\varc'$ is 1 and the minimal index with entry $-1$, and so on, at some point we reach the index $j$ or $i$. Suppose that we first reach the index $j$, so that $c'_j=1$. But then, in order for $\varc$ to be the largest center w.r.t. $\prec$ then necessarily $c_j=0$ (remember that $p_j<0$), which is of different parity than that of $c'_j$. Similarly, if we first reach the index $i$ then necessarily $c'_i=-1$ and $c_i=0$, again they are of different parity. Clearly, the same happens if both indices $i$ and $j$ are reached at the same time. It follows that $\chi(\varc) \neq \chi(\varc')$ and thus $\chi(\varp) \neq \chi(\varp')$.
	\\ \\
	{\bf Case 3:} $\varp = (p_1,\ldots,p_n) \in F_{ij} \cap F_{-l}$, that is, $p_i=0$, $p_j=-1$ and $-1 \leq p_k \leq 0$, for $k \neq i,j$.
	In this case, the center $\varc$ for which $\chi(\varp) = \chi(\varc)$ is determined as in case 1. If $\varp' = (p'_1,\ldots,p'_n) \in F_{l}$ is an antipodal point to $\varp$ then we are in case 1 and $\chi(\varp) \neq \chi(\varp')$. Otherwise, suppose that $\varp' = (p'_1,\ldots,p'_n) \in F_{ji}$ is an antipodal point to $\varp$, where $0 \leq p'_j < 1$, $p'_i=p'_j-1$ and $p'_i \leq p'_k \leq p'_j$, for $k \neq i,j$. If $i<j$ then, as in case 2, we get that $(c_j,c_j') = (0,1)$ or $(c_i,c_i') = (0,-1)$. If $i>j$ then we get that $(c_i,c_i') = (1,0)$ or $(c_j,c_j') = (-1,0)$. In both cases $\chi(\varc) \neq \chi(\varc')$ and thus $\chi(\varp) \neq \chi(\varp')$. 
\end{proof}

\begin{proposition}
	The tropical chromatic number of $\Rn$ satisfies 
	$\displaystyle \chi_{\mathrm{tr}}(\Rn) \geq \Omega\!\left(\frac{2^{n}}{\sqrt{n}}\right).$
\end{proposition}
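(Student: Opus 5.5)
The plan is to exhibit an equilateral set of size $\binom{n+1}{\lfloor (n+1)/2\rfloor}$ at tropical unit distance. Such a set is a clique in the unit-distance graph, so it forces that many colors. Then Stirling's formula gives $\binom{n+1}{\lfloor (n+1)/2\rfloor} = \Theta(2^{n+1}/\sqrt{n}) = \Omega(2^n/\sqrt{n})$.

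\textbf{Construction.} Work in homogeneous coordinates in $\Projn$. Let $m=\lfloor (n+1)/2\rfloor$. To each subset $A\subseteq\{1,\ldots,n+1\}$ with $\abs{A}=m$, associate the $0/1$ indicator vector $\varx_A\in\{0,1\}^{n+1}$. We then project to $\Rn$ by subtracting the last coordinate, as in Section~\ref{sec:background}. The resulting points lie in $\Zn$.

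\textbf{Distance computation.} By \eqref{eq:dist}, $d_{\mathrm{tr}}(\varx_A,\varx_B)=\max_i(x_i-y_i)-\min_j(x_j-y_j)$. For indicator vectors of distinct sets of equal size, $A\setminus B$ and $B\setminus A$ are both nonempty. Hence the difference vector $\varx_A-\varx_B$ takes both values $1$ and $-1$, so the distance is exactly $2$. The sets form an antichain, which is precisely the property Sperner's theorem maximizes. Thus $\{\varx_A\}$ is an equilateral set of $\binom{n+1}{m}$ points at mutual tropical distance $2$. Rescaling by $\half$ gives points at mutual tropical distance $1$, and these must receive pairwise distinct colors. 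Therefore $\tchrnum{\Rn}\ge\binom{n+1}{m}$.

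\textbf{Asymptotics.} It remains to invoke the standard estimate $\binom{N}{\lfloor N/2\rfloor}\sim 2^N\sqrt{2/(\pi N)}$ with $N=n+1$. This yields $\tchrnum{\Rn}\ge c\,2^{n}/\sqrt{n}$ for some absolute constant $c>0$ and all $n\ge1$.

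None of these steps is a real obstacle. The only point requiring care is to check that the projection to $\Rn$ preserves tropical distance, which holds because \eqref{eq:dist} is invariant under adding multiples of $\One$. One might also note that a better constant cannot come from this approach alone, since Swanepoel showed the equilateral bound is sharp.
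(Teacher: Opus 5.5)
Your proof is correct and is essentially the paper's argument: the paper also takes the $\binom{n+1}{\lfloor (n+1)/2\rfloor}$ indicator vectors of $\lfloor (n+1)/2\rfloor$-subsets, written in homogeneous coordinates, as an equilateral set at tropical distance $2$ (Proposition~\ref{prop:lower_edim}), and then applies Stirling's formula. One correction to your closing aside, which does not affect the proof: sharpness of this equilateral bound is only conjectured in general (Conjecture~\ref{conj:edim}). Swanepoel proved it only for $n\le 3$ and for equilateral sets at distance $2R$ lying on a tropical sphere of radius $R$, so a larger clique is not yet ruled out.
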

\begin{proof}
	In Proposition~\ref{prop:lower_edim} we show that the maximal number of points in $\Rn$ that form a tropical equilateral set is at least $\displaystyle \binom{n+1}{\lfloor (n+1)/2 \rfloor}$.
	Asymptotically, we get by Stirling formula that 
	$$\displaystyle \binom{n+1}{\lfloor (n+1)/2 \rfloor} \sim
	\frac{\sqrt{2}\cdot 2^{n+1}}{\sqrt{\pi (n+1)}}$$
	and hence
	$$
	\chi_{\mathrm{tr}}(\Rn) \geq \Omega\!\left(\frac{2^{n}}{\sqrt{n}}\right).
	$$
\end{proof}

\begin{conjecture}
	\label{conj:tr_chi_Rn}
	The tropical chromatic number of $\Zn$ and of $\Rn$ is $2^n$.
\end{conjecture}
\begin{remarks}
	\begin{enumerate}
		\item The conjecture is verified in dimensions $n \leq 3$ (see Sections~\ref{sec:chromatic_number_R2}, \ref{sec:chromatic_number_R3}). 
		\item The number $m_1(\Rn, \norm{\cdot})$, introduced by Larman and Rogers \cite{LR72}, is the supremum, over measurable unbounded sets $S \subset \Rn$ containing no two points at unit distance (with respect to the norm $\norm{\cdot}$), of the following limit of densities
		$$
		\limsup_{R \to \infty} \frac{\lvert S \cap [-R,R]^n  \rvert}{(2R)^n}.
		$$
		It follows that $\chi_m(\Rn, \norm{\cdot}) \geq \frac{1}{m_1(\Rn, \norm{\cdot})}$, where $\chi_m$ refers to the chromatic number with the restriction that the subset of each color is measurable. It is conjectured by Bachoc and Robins (see \cite{BBMP19}) that $m_1(\Rn, \norm{\cdot}) = 2^{-n}$ when the unit ball w.r.t. the norm $\norm{\cdot}$ is a parallelohedron (as is the tropical ball). Specifically, Bachoc et al. \cite{BBMP19} proved that this conjecture holds for $n = 2$ and for the norm induced by the parallelohedron that is the Voronoi regions of the lattices $A_n$, for $n \geq 2$. These Voronoi regions are exactly the tropical balls (although not mentioned in this name in \cite{BBMP19}). The proof uses a discretization of the problem by bounding $m_1(\Rn, \tnorm{\cdot})$ from above by the supremum of densities of independent sets of a graph with edges connecting vertices of unit tropical distance.
	\end{enumerate}
\end{remarks}
\begin{example}
	Appendix~\ref{sec:Appendix A} is about a graph in $\ZZ^4$ with 37 vertices and 386 edges, connecting vertices of tropical distance 2. It is 11-colorable but not 10-colorable, as verified by different solvers.
	That is above the lower bound $\displaystyle \binom{4+1}{\lfloor (4+1)/2 \rfloor} = 10$.
\end{example}

\section{The tropical chromatic number of the plane}
\label{sec:chromatic_number_R2}
As was shown in Bachoc et al. \cite{BBMP19}, the chromatic number of $\RR^2$ is 4 when the unit ball is a parallelohedron. This applies also to the tropical ball.
As for a lower bound to $\chi_{\mathrm{tr}}(\RR^2)$, we give in the following example a
tropical analogue of the Moser spindle \cite{MM61} which requires four colors.
\begin{example}[The tropical Moser spindle]
	Let $G(V,E)$ be the following plane graph with 7 vertices and 11 edges of tropical unit length. \\
	$V$ consists of the vertices:
	$
	v_1=(1,1), v_2=(0,0), v_3=(1,0), v_4=(0.5,0), v_5=(1.5,0.5), v_6=(0,-1), v_7=(1,-0.5).
	$
	\\
	$E$ consists of the edges:
	$
	(v_1,v_2), (v_1,v_3), (v_2,v_3), (v_1,v_4), (v_1,v_5), (v_4,v_5), (v_2,v_6), \\
	(v_3,v_6), (v_4,v_7), (v_5,v_7), (v_6,v_7).
	$
	A 4-coloring of the graph is shown in Figure~\ref{fig:tr_Moser_spindle}.
\end{example}
\begin{figure}[h]
	\centering
	\begin{tikzpicture}
	\begin{axis}[
		axis x line=middle,
		axis y line=middle,
		grid = major,
		width=8cm,
		height=8cm,
		grid style={dashed, gray!80},
		xmin=-1.0,     
		xmax= 2.5,    
		ymin= -2.0,     
		ymax= 2.0,   
		xlabel=$x$,
		ylabel=$y$,
		/pgfplots/xtick={0.0,1.0,2.0}, 
		/pgfplots/ytick={-1.0,0.0,1.0}, 
		]
		\coordinate (v1) at (1,1);
		\coordinate (v2) at (0,0);
		\coordinate (v3) at (1,0);
		\coordinate (v4) at (0.5,0);
		\coordinate (v5) at (1.5,0.5);
		\coordinate (v6) at (0,-1);
		\coordinate (v7) at (1,-0.5);
		\draw [line width=1.5,gray] (v1) -- (v2) -- (v3) -- cycle;
		\draw [line width=1.5,gray] (v2) -- (v6) -- (v3) -- cycle;
		\draw [line width=1.5,gray] (1,0.5) -- (0.5,0) -- (0.5,-0.5) -- (v7) -- (1.5,0) -- (v5) -- cycle;
		\node at (1,1.17){$v_1$};
		\node at (-0.2,0){$v_2$};
		\node at (1.2,0){$v_3$};
		\node at (0.5,-0.2){$v_4$};
		\node at (1.7,0.5){$v_5$};
		\node at (0,-1.2){$v_6$};
		\node at (1,-0.7){$v_7$};
		\filldraw [red] (1,1) circle (4pt);
		\filldraw [green] (0,0) circle (4pt);
		\filldraw [blue] (1,0) circle (4pt);
		\filldraw [green] (0.5,0) circle (4pt);
		\filldraw [blue] (1.5,0.5) circle (4pt);
		\filldraw [red] (0,-1) circle (4pt);
		\filldraw [black] (1,-0.5) circle (4pt);
	\end{axis}
	\end{tikzpicture}
	\qquad
	\begin{tikzpicture}
		\begin{axis}[
			axis x line=middle,
			axis y line=middle,
			grid = major,
			width=8cm,
			height=8cm,
			grid style={dashed, gray!80},
			xmin=-1.0,     
			xmax= 2.5,    
			ymin= -2.0,     
			ymax= 2.0,   
			xlabel=$x$,
			ylabel=$y$,
			/pgfplots/xtick={0.0,1.0,2.0}, 
			/pgfplots/ytick={-1.0,0.0,1.0}, 
			]
			\coordinate (v1) at (1,1);
			\coordinate (v2) at (0,0);
			\coordinate (v3) at (1,0);
			\coordinate (v4) at (0.5,0);
			\coordinate (v5) at (1.5,0.5);
			\coordinate (v6) at (0,-1);
			\coordinate (v7) at (1,-0.5);
			\draw [line width=1.5,gray] (v1) -- (v2) -- (v3) -- cycle;
			\draw [line width=1.5,gray] (v2) -- (v6) -- (v3) -- cycle;
			\draw [line width=1.5,gray] (v1) -- (v4) -- (v5) -- cycle;
			\draw [line width=1.5,gray] (v4) -- (v7) -- (v5) -- cycle;
			\draw [line width=1.5,gray] (v6) -- (v7);
			\node at (1,1.17){$v_1$};
			\node at (-0.2,0){$v_2$};
			\node at (1.2,0){$v_3$};
			\node at (0.5,-0.2){$v_4$};
			\node at (1.7,0.5){$v_5$};
			\node at (0,-1.2){$v_6$};
			\node at (1,-0.7){$v_7$};
			\filldraw [red] (1,1) circle (4pt);
			\filldraw [green] (0,0) circle (4pt);
			\filldraw [blue] (1,0) circle (4pt);
			\filldraw [green] (0.5,0) circle (4pt);
			\filldraw [blue] (1.5,0.5) circle (4pt);
			\filldraw [red] (0,-1) circle (4pt);
			\filldraw [black] (1,-0.5) circle (4pt);
		\end{axis}
	\end{tikzpicture}
	\caption{The tropical Moser spindle. Left: edges as tropical lines, right: edges as straight lines}
	\label{fig:tr_Moser_spindle}
\end{figure}

A 4-coloring of the plane according to the same scheme as in Theorem~\ref{thm:$2^n$-colorable} assigns the points $\{\varv=(v_1,v_2) \in \partial B^2_{\mathrm{tr}}(\varc) : v_2 < c_2 \}$ in the lower part of the boundary of the cell $B^2_{\mathrm{tr}}(\varc)$ with center $\varc = (c_1,c_2)$ the same color as the color of the inner of the cell (see Figure~\ref{fig:4 coloring}).

\begin{figure}[h]
	\centering
\begin{tikzpicture}[scale=0.55, line cap=round, line join=round]
	\def\s{1.00}          
	\def\thinlw{0.5pt}    
	\def\thicklw{2.2pt}   
	\def\vrad{0.18}       
	\def\gap{0.14}        
	
	\definecolor{c0}{HTML}{4D4DB2} 
	\definecolor{c1}{HTML}{E30022} 
	\definecolor{c2}{HTML}{808000} 
	\definecolor{c3}{HTML}{FF7F00} 
	
	\newcommand{\HexVerts}{%
		\coordinate (v0) at (\s,0);
		\coordinate (v1) at (\s,\s);
		\coordinate (v2) at (0,\s);
		\coordinate (v3) at (-\s,0);
		\coordinate (v4) at (-\s,-\s);
		\coordinate (v5) at (0,-\s);
	}
	
	\newcommand{\OpenEdge}[2]{%
		\path (#1) -- (#2)
		coordinate[pos=\gap] (Pstart)
		coordinate[pos=1-\gap] (Pend);
		\draw[line width=\thicklw] (Pstart) -- (Pend);
	}
	
	\newcommand{\HexFill}[3]{
		\begin{scope}[shift={({#1},{#2})}]
			\HexVerts
			\fill[#3!70] (v0)--(v1)--(v2)--(v3)--(v4)--(v5)--cycle;
		\end{scope}
	}
	
	\newcommand{\HexThin}[2]{
		\begin{scope}[shift={({#1},{#2})}]
			\HexVerts
			\draw[black!65, line width=\thinlw]
			(v0)--(v1)--(v2)--(v3)--(v4)--(v5)--cycle;
		\end{scope}
	}
	
	\newcommand{\HexEdgesAndVerts}[3]{
		\begin{scope}[shift={({#1},{#2})}]
			\HexVerts
			{\color{#3!95}
				\OpenEdge{v3}{v4}
				\OpenEdge{v4}{v5}
				\OpenEdge{v5}{v0}
			}
			\fill[#3!95] (v4) circle[radius=\vrad];
			\fill[#3!95] (v5) circle[radius=\vrad];
		\end{scope}
	}
	
	\foreach \m in {-4,...,4}{
		\foreach \n in {-4,...,4}{
			\def\CX{2*\m - \n}
			\def\CY{\m + \n}
			\ifodd\m
			\ifodd\n \def\COL{c3}\else\def\COL{c2}\fi
			\else
			\ifodd\n \def\COL{c1}\else\def\COL{c0}\fi
			\fi
			\HexFill{\CX}{\CY}{\COL}
		}
	}
	\foreach \m in {-4,...,4}{
		\foreach \n in {-4,...,4}{
			\def\CX{2*\m - \n}
			\def\CY{\m + \n}
			\HexThin{\CX}{\CY}
		}
	}
	\foreach \m in {-4,...,4}{
		\foreach \n in {-4,...,4}{
			\def\CX{2*\m - \n}
			\def\CY{\m + \n}
			\ifodd\m
			\ifodd\n \def\COL{c3}\else\def\COL{c2}\fi
			\else
			\ifodd\n \def\COL{c1}\else\def\COL{c0}\fi
			\fi
			\HexEdgesAndVerts{\CX}{\CY}{\COL}
		}
	}
	
\end{tikzpicture}
	\caption{Tropical 4-coloring of the plane}
	\label{fig:4 coloring}
\end{figure}

The next example, although not optimal, shows a 6-coloring of $\RR^2$ which does not rely on the tiling of the plane.
\begin{example}
	The plane can be colored with the colors $0, \ldots,5$ in the following way.
	For $\varx=(x_1,x_2)$, let
	$$
	S(\varx)=x_1+x_2, \quad D(\varx)=x_1-x_2.
	$$
	Let the color of $\varx$ be
	\begin{equation}
		\chi(\varx) := (\lfloor S(\varx) \rfloor + 3 \cdot \lfloor D(\varx) \rfloor) \: (\mathrm{mod}~6),
	\end{equation}
	where $\lfloor x \rfloor \in \ZZ$ is the floor function.
	We need to show that given a tropical unit circle $S^1_{\mathrm{tr}}({\varc})$ with center at $\varc$, for each $\varq \in S^1_{\mathrm{tr}}({\varc})$ we have $\chi({\varq}) \neq \chi({\varc})$.
	In table~\ref{tab:tropical 6 colors} we list the sets of possible differences
	$$
	\Delta(\varc,\varq) = \chi({\varq}) - \chi({\varc})
	$$
	for three of the closed line segments $\varc + [(a,b), (c,d)] \subset S^1_{\mathrm{tr}}({\varc})$ to which $\varq$ belongs.
	By symmetry, for $\varq' =  2\varc-\varq$, the antipodal point of $\varq$, we have $\Delta(\varc,\varq') = -\Delta(\varc,\varq')$.
	\begin{center}
		\begin{table}[h]
			\begin{tabular}{cc} 
				\toprule
				Line segment of $\varq$ & $\Delta(\varc,\varq)$ \\
				\midrule
				$\varc + [(0,-1),(1,0)]$ & $\{-1,0,1\} +3 \cdot \{1\} = \{2,3,4\}$ \\
				\hline
				$\varc + [(1,0),(1,1)]$ & $\{1,2\} +3 \cdot \{0,1\} = \{1,2,4,5\}$ \\ 
				\hline
				$\varc + [(0,1),(1,1)]$ & $\{1,2\} +3 \cdot \{-1,0\} = \{-2,-1,1,2\}$ 
			\end{tabular}	
			\vspace{5pt}
			\caption{Differences in color for points that are at tropical distance 1}
			\label{tab:tropical 6 colors}
		\end{table}
	\end{center}
	As can be seen, all the possible differences are not congruent to 0 modulo 6, thus $\chi({\varq}) \neq \chi({\varc})$ for every point $\varq \in  S^1_{\mathrm{tr}}({\varc})$.
\end{example}

\section{The tropical chromatic number of $\RR^3$}
\label{sec:chromatic_number_R3}
We show that Conjecture~\ref{conj:tr_chi_Rn} holds for $n=3$.
\begin{theorem}
	\label{thm:8-colorable}
	The tropical chromatic number of $\RR^3$ is 8.
\end{theorem}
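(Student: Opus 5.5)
The upper bound $\tchrnum{\RR^3} \leq 8$ is immediate from Theorem~\ref{thm:$2^n$-colorable} with $n=3$. So the whole content of the statement is the lower bound $\tchrnum{\RR^3} \geq 8$. Following the abstract, I would establish it by exhibiting an explicit finite unit-distance graph $G$ in $\RR^3$ that is not $7$-colorable. I would work at distance $2$ with vertices in $\ZZ^3$ (or $\half\ZZ^3$) and rescale by $\half$ at the end, as in the proof of Theorem~\ref{thm:$2^n$-colorable}. The graph is the one with $62$ vertices and $577$ edges listed in Appendix~\ref{sec:Appendix B}.

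The construction would proceed in layers. First I would start from the largest tropical equilateral sets available in $\RR^3$. Proposition~\ref{prop:lower_edim} gives such a set of size $\binom{4}{2}=6$, realised by the points $\varx$ with entries in $\{0,1\}$ with $x_0=0$ and with exactly two of the four homogeneous coordinates equal to $1$. These are $6$-cliques, so every $7$-coloring uses all but one color on each of them. Next I would glue many such cliques, together with their translates and reflections (the tropical metric is invariant under translations, under $\varx\mapsto -\varx$, and under permutations of the homogeneous coordinates $x_0,\dots,x_3$). The gluing is spindle-like, as in the tropical Moser spindle of Section~\ref{sec:chromatic_number_R2}: two cliques sharing a large subset force the remaining vertices to carry equal or restricted colors. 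Then I would add further vertices at tropical distance $2$ from these forced vertices until a contradiction appears. A good heuristic target is to make $G$ rich inside a small box such as $\{-2,\dots,2\}^3$, where antipodal facet pairs of the rhombic dodecahedron give many edges.

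For the verification I would (i) check every listed edge by the distance formula $d_{\mathrm{tr}}(\varx,\vary)=\max\{\max_i(x_i-y_i),0\}-\min\{\min_i(x_i-y_i),0\}$, (ii) exhibit an explicit $8$-coloring as a sanity check (one could simply restrict the coloring of Theorem~\ref{thm:$2^n$-colorable}), and (iii) prove non-$7$-colorability. For (iii) I would encode $7$-colorability as a SAT instance and break symmetry by fixing the colors of one $6$-clique to $1,\dots,6$. The result could then be certified by several independent solvers, and ideally by a DRAT proof. Where possible, I would also give a human-readable case analysis: fix the colors on a central clique, then propagate the forced colors along the spindle-type gadgets.

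The main obstacle is finding $G$ itself. No purely conceptual argument is expected, since the equilateral bound only gives $6$ and the measurable bound $2^3$ of \cite{BBMP19} does not apply to arbitrary colorings. I would first search computationally. I would take the distance-$2$ graph on a box of $\ZZ^3$, confirm that it is not $7$-colorable, and then shrink it greedily by deleting vertices while the SAT solver still reports unsatisfiability, arriving at a minimal-ish certificate such as the $62$-vertex graph.
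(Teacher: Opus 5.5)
Your proposal is correct and follows essentially the same route as the paper. The upper bound comes from the $2^n$-coloring theorem, and the lower bound from the $62$-vertex, $577$-edge distance-$2$ graph of Appendix B, which the paper likewise obtained by pruning a box ($\{0,\dots,4\}^3$, a translate of your $\{-2,\dots,2\}^3$) for as long as it remained non-$7$-colorable, and then certified with CaDiCaL (UNSAT for $7$ colors, SAT for $8$) plus an independent HiGHS MILP check. Your clique and spindle layering is only a search heuristic and does not enter the actual certificate, which in both your plan and the paper is purely computational.
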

\begin{proof}
	By Theorem~\ref{thm:$2^n$-colorable}, we know that an upper bound to the tropical chromatic number of $\RR^3$ is $2^3=8$. This number is also a lower bound as demonstrated by a graph with vertices in the set $\{0,1,2,3,4\}^3$ that is 8-colorable but not 7-colorable. The graph has 62 vertices and 577 edges, connecting vertices of tropical distance 2 from one another, as seen in Appendix~\ref{sec:Appendix B}.
	The graph was constructed by a process of pruning the cube $\{0,1,2,3,4\}^3$ as long as it was not 7-colorable. We do not claim that it is a minimal graph with respect to the number of vertices and edges that requires 8 colors.
	It contains the following clique of 6 vertices that are of equilateral tropical distance 2:
	$$
	31\,(2,2,2),\;34\,(2,3,1),\;44\,(3,2,1),\;52\,(3,4,3),\;58\,(4,3,3),\;60\,(4,4,2).
	$$

	We converted the graph coloring problem into a Boolean satisfiability problem (SAT) in the form of a DIMACS CNF file, both for 7-coloring and for 8-coloring.
	The CNF file was generated using the standard encoding: one Boolean variable $x(v,c)$ per vertex-color pair, with clauses for at least one color for each vertex $v$:
	$x(v,0) \vee \cdots \vee x(v,k~-~1)$ (for $k=7$ or $k=8$) and at most one color for each vertex: 
	$\neg x(v,c_i) \vee \neg x(v,c_j)$ for all $c_i < c_j$; and a clause
	$\neg x(u,c) \vee \neg x(v,c)$ for every edge $(u,v)$ and color $c$.
	We then ran the SAT solver CaDiCaL \cite{Cadical24} on the CNF files. The output was UNSATISFIABLE for 7-coloring and SATISFIABLE for 8 coloring.
	As an independent check, we formulated the $7$-coloring problem as a binary
	mixed-integer linear feasibility problem and solved it with the exact MILP solver
	HiGHS \cite{HH18}.
	The solver returned the model as infeasible, that is, the graph is not $7$-colorable.
\end{proof}

\section{The tropical equilateral dimension of $\Rn$}
\label{sec:equilateral}
As stated in the Introduction, the following results: Proposition~\ref{prop:lower_edim}, Proposition~\ref{prop:edim_2_3}, Theorem~\ref{thm:edim_ball}, as well as Conjecture~\ref{conj:edim}, already appeared in Swanepoel \cite{Swan07}, which we were aware of only after uploading the first version of this article to the arXiv.

Related to the tropical chromatic number is the equilateral dimension (see the expository paper \cite{Swan04} and the survey papers \cite{MSW01} and \cite{Swan18} about this problem in finite dimensional normed spaces).

\begin{question}
	What is the tropical equilateral dimension of $\ZZ^n$ ($\Rn$), that is, what is the maximal number of points in $\ZZ^n$ ($\Rn$)  that are of the same tropical distance from each other?
\end{question}
Before providing a lower bound to this problem, we show that it suffices to work over $\ZZ$ and not over $\RR$. Let us denote the tropical equilateral dimensions by $\eqdim{\cdot}$.
\begin{proposition}
	$\eqdim{\Rn} = \eqdim{\Qn} = \eqdim{\Zn}$.
\end{proposition}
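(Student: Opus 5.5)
The plan is to prove $\eqdim{\Zn} \le \eqdim{\Qn} \le \eqdim{\Rn} \le \eqdim{\Zn}$. The first two inequalities are immediate from $\Zn \subset \Qn \subset \Rn$. For the last one, I first reduce from $\Qn$ to $\Zn$ by scaling. The tropical distance is homogeneous, $d_{\mathrm{tr}}(\lambda\varx,\lambda\vary)=\abs{\lambda}\, d_{\mathrm{tr}}(\varx,\vary)$, and translation invariant. So given a rational equilateral set, I multiply by a common denominator $N$ of all coordinates. This yields an integral equilateral set of the same size, with common distance $N$ times the original.

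The main step is $\eqdim{\Rn} \le \eqdim{\Qn}$: from a real equilateral set I must produce a rational one of the same size. Take an equilateral set $\{\varp_1,\ldots,\varp_m\}\subset\Rn$ with common distance $1$. For each pair $a<b$, the distance $d_{\mathrm{tr}}(\varp_a,\varp_b)$ is the maximum of the finitely many linear forms $x_i-x_j$ (with $x_0=0$) applied to $\varp_a-\varp_b$.

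I record, for each pair, which linear forms attain the maximum and, for every other form, the strict inequality it satisfies. Every such form has rational (indeed $0,\pm1$) coefficients in the $mn$ unknown coordinates. The resulting system therefore consists of two kinds of conditions. The first kind is linear equalities: the forms that attain the maximum equal $1$. The second kind is strict linear inequalities: the forms that do not attain the maximum are less than $1$. The point $(\varp_1,\ldots,\varp_m)\in\RR^{mn}$ is a solution.

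Any solution of this system is again an equilateral set with distance $1$. I also need the points to remain distinct, but that is automatic, since pairwise distance $1$ forces distinctness.

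The key lemma is then the following. If a system of linear equalities and strict inequalities with rational coefficients has a real solution, it has a rational solution. The argument runs in three steps. The solution set of the equalities is an affine subspace defined over $\QQ$, so its rational points are dense in it. The strict inequalities cut out a relatively open subset of that subspace. This subset is nonempty because it contains the real solution, and hence it contains a rational point.

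I expect this density step to be the only real obstacle. To handle it, I will parametrize the affine subspace by a rational base point plus a rational basis, obtained via Gaussian elimination over $\QQ$. Then I approximate the real parameters by rational ones; continuity of the strict inequalities keeps them valid. Combining everything gives the equality of the three equilateral dimensions.
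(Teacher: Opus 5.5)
Your proof is correct and follows the paper's strategy. You fix, for each pair, which of the linear forms $x_k-x_l$ attain the maximum, which turns the equilateral condition into a linear system with coefficients in $\{0,\pm1\}$ that has a real solution; you then extract a rational solution and clear denominators. The one difference is in how the rational solution is obtained: you encode the non-maximizing forms by strict inequalities and use density of rational points in a rational affine subspace, whereas the paper picks one maximizing and one minimizing index per pair, uses non-strict inequalities, and cites the fact that a nonempty rational polyhedron contains a rational point.
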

\begin{proof}
	Clearly, $\eqdim{\Rn} \geq \eqdim{\Qn} \geq \eqdim{\Zn}$. We will show that any clique of size $m$ and of tropical distance $r$ in $\Rn$ implies the existence of a clique of size $m$ and of tropical distance $1$ in $\Qn$ and a clique of size $m$ and of tropical distance $d$ in $\Zn$, for some positive $d \in \ZZ$.

	To see that, suppose that $\eqdim{\Rn} = m$ and $\{\varx^{(1)}, \ldots, \varx^{(m)}\}$ forms an equilateral set in $\Rn$ of tropical distance $r \in \RR$. By rescaling, we may assume that $r=1$. For each pair of points $\varx^{(i)}, \varx^{(j)}$,
	$$
		d_{\mathrm{tr}}(\varx^{(i)}, \varx^{(j)}) = \max_{0 \leq k,l \leq n} \{\varx^{(i)}_k - \varx^{(j)}_k -\varx^{(i)}_l + \varx^{(j)}_l\} = 1,
	$$
	where we added a fixed dummy entry $\varx^{(i)}_0 = \varx^{(j)}_0 = 0$.
	\\
	Let the indices $0 \leq p(i,j), p(j,i) \leq n$ be such that
	$$
		\varx^{(i)}_{p(i,j)} - \varx^{(j)}_{p(i,j)}
		= \max_{0 \leq k \leq n} \varx^{(i)}_k - \varx^{(j)}_k,
	$$
	and
	$$
		\varx^{(j)}_{p(j,i)} - \varx^{(i)}_{p(j,i)}
		= \max_{0 \leq l \leq n} \varx^{(j)}_l - \varx^{(i)}_l.
	$$
	It follows that
	$$
		\varx^{(i)}_{p(i,j)} - \varx^{(j)}_{p(i,j)} + \varx^{(j)}_{p(j,i)} - \varx^{(i)}_{p(j,i)} = 1.
	$$
	
	Let now $y^{(i)}_0, \dots, y^{(i)}_{n}$, with $y^{(i)}_{0} := 0$, for $i = 1, \ldots, m$, be variables.
	Consider the following system of equations 
	$$
		y^{(i)}_{p(i,j)} - y^{(j)}_{p(i,j)} + y^{(j)}_{p(j,i)} - y^{(i)}_{p(j,i)} = 1,
	$$
	for $1 \leq i < j \leq m$, together with the inequalities 
	$$
		y^{(i)}_{p(i,j)} - y^{(j)}_{p(i,j)} \;\geq \;
		y^{(i)}_l - y^{(j)}_l \; \geq\; 
		y^{(i)}_{p(j,i)} - y^{(j)}_{p(j,i)},
	$$
	for $l = 0, \ldots, n$ and $1 \leq i <j \leq m$.
	This is a finite system of linear equations and inequalities over $\ZZ$ that has a real solution.
	Hence, the solution set is a nonempty rational polyhedron and therefore it contains a rational point. Thus, we have an equilateral set of size $m$ and tropical distance 1 in $(\frac{1}{d}\QQ)^n$, where $d$ is the common denominator of the values in the rational solution. Multiplying by $d$ gives an equilateral set of tropical distance $d$ and of size $m$ in $\ZZ^n$.
\end{proof}
The next proposition shows that already an equilateral set with respect to the tropical norm can be of exponential size. 
\begin{proposition}[Swanepoel \cite{Swan07}]
	\label{prop:lower_edim}
	A lower bound to the tropical equilateral dimension of 
	$\Rn$ is $\displaystyle \binom{n+1}{\lfloor (n+1)/2 \rfloor}.$
\end{proposition}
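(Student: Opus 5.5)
We work in the projective model $\Projn$, where the tropical distance between $\varx$ and $\vary$ is $\max_i (x_i-y_i) - \min_j (x_j-y_j)$. For every subset $A\subseteq\{1,\ldots,n+1\}$ of size $k=\lfloor (n+1)/2\rfloor$ we take the $0/1$ indicator vector $\One_A\in\ZZ^{n+1}$ and pass to its image in $\Projn$. This gives $\binom{n+1}{k}$ points. We then normalize the last coordinate to $0$ by subtracting $x_{n+1}$, which maps them to points of $\Zn$ with all entries in $\{-1,0,1\}$. Since this identification $\Projn\to\Rn$ is the one used in Section~\ref{sec:background}, it preserves the tropical distance. So it suffices to compute distances in the projective model.

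First we check that distinct points remain distinct in $\Projn$. Two indicator vectors differ by a multiple of $\One$ only if they are equal, or if one is $\Zero$ and the other is $\One$. Neither case occurs, because $0<k<n+1$ for $n\ge 1$.

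Next we compute the distance. Take $A\neq B$ with $|A|=|B|=k$. The difference $\One_A-\One_B$ has entries in $\{-1,0,1\}$. Because $|A|=|B|$ and $A\neq B$, both $A\setminus B$ and $B\setminus A$ are nonempty. Hence the entry $1$ occurs (at indices in $A\setminus B$) and the entry $-1$ occurs (at indices in $B\setminus A$). Therefore
$d_{\mathrm{tr}}(\One_A,\One_B)=\max_i(\cdot)-\min_j(\cdot)=1-(-1)=2$ for every pair. The family is thus equilateral with common distance $2$, and rescaling by $\half$ gives unit distance if desired.

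The key point is the observation that the whole family is an antichain in the Boolean lattice. Equal cardinality is exactly what guarantees that both set differences are nonempty, which is why the distance is $2$ and never $1$. By Sperner's theorem the middle layer is the largest such antichain. There is no real obstacle beyond checking that the identification with $\Rn$ (dropping the last coordinate) is distance-preserving, and this is immediate from the definition of the distance.
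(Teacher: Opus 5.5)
Your proposal is correct and follows the paper's proof: the indicator vectors of the $\lfloor (n+1)/2\rfloor$-subsets of $[n+1]$ have pairwise tropical distance $1-(-1)=2$ in $\Projn$, and normalizing the last coordinate to $0$ carries this equilateral set into $\Zn$. Your explicit check that the points remain distinct modulo $\RR\One$ is a small addition the paper leaves implicit. The appeal to Sperner's theorem is not needed for this lower bound; it belongs to the matching upper bound on the sphere, Theorem~\ref{thm:edim_ball}.
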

\begin{proof}
	Let $k = \lfloor (n+1)/2 \rfloor$ and let $N = \binom{n+1}{k}$.
	Consider the set $S$ of $N$ points in $\ZZ^{n+1}/\ZZ\One$, written in homogeneous coordinates, with exactly $k$ of the entries being $1$ and $n+1-k$ entries being $0$.
	For each pair of points $\varx, \vary \in S$ it holds
	$$
		d_{\mathrm{tr}}(\varx, \vary) = 
		\max_{1 \leq i,j \leq n+1} \{x_i - y_i -x_j + y_j\} =
		(1-0) - (0-1) = 2.
	$$
	So, we have $N$ different points in $\ZZ^{n+1}/\ZZ\One$ with tropical distance $2$ between every two points.
	Each such point $(x_1 : \ldots : x_n : x_{n+1}) = (x_1 - x_{n+1} : \ldots : x_n - x_{n+1}:0)$ is mapped to $(x_1 - x_{n+1}, \ldots, x_n - x_{n+1}) \in \Zn$, where 
	the  tropical distances between the points are preserved. The result is an equilateral set of tropical distance $2$ consisting of $N$ points in $\{0,1\}^n$ or in $\{-1,0\}^n$.
	We note that these points are vertices of the tropical unit ball.
\end{proof}
\begin{example}
Let $n=4$. Then $k = \lfloor 5/2 \rfloor = 2$ and $N = \binom{5}{2} = 10$. The $10$ points of tropical distance $2$ from each other and written in homogeneous coordinates are
\begin{align*}
	&(1:1:0:0:0),\; (1:0:1:0:0),\; (1:0:0:1:0),\; (1:0:0:0:1),\\ 
	&(0:1:1:0:0),\; (0:1:0:1:0),\; (0:1:0:0:1),\; (0:0:1:1:0),\\
	&(0:0:1:0:1),\; (0:0:0:1:1).
\end{align*}
These points are mapped to
\begin{align*}
	&(1,1,0,0),\; (1,0,1,0),\; (1,0,0,1),\; (0,-1,-1,-1),\; (0,1,1,0), (0,1,0,1),\\
	&(-1,0,-1,-1),\; (0,0,1,1),\; (-1,-1,0-1),\; (-1,-1,-1,0)
\end{align*}
in $\ZZ^4$ to form an equilateral set of tropical distance 2.
\end{example}

Next, we show that in dimensions 2 and 3 the lower bound of Proposition~\ref{prop:lower_edim} is an equality.
\begin{proposition}[Swanepoel \cite{Swan07}]
	\label{prop:edim_2_3}
	$\eqdim{\RR^2} = \binom{3}{1}=3$ and $\eqdim{\RR^3} = \binom{4}{2}=6$.
\end{proposition}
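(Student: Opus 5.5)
The lower bounds $\eqdim{\RR^2}\geq 3$ and $\eqdim{\RR^3}\geq 6$ come from Proposition~\ref{prop:lower_edim}, so the plan concerns only the upper bounds. Let $\{\varx^{(1)},\ldots,\varx^{(m)}\}$ be an equilateral set at tropical distance $2$ in $\Rn$, $n\in\{2,3\}$. The tool I will use is the standard packing/parallelohedron argument for equilateral sets. The balls $\Bn(\varx^{(i)})$, all of radius $1$, have pairwise disjoint interiors, since the distance is exactly $2$. They also pairwise touch, so they are all contained in the ball of radius $2$ around any one of the points. This volume argument only gives Petty's bound $2^n$, which is $4$ and $8$, not $3$ and $6$. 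A finer argument is needed.

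The finer argument will go through the homogeneous description. Write the points in $\Ran/\RR\One$. For each pair $i\neq j$, record which coordinate attains $\max_k (x^{(i)}_k-x^{(j)}_k)$ and which attains the minimum. Equivalently, $\varx^{(j)}-\varx^{(i)}$ lies on a facet $F_{ab}$ of the radius-$2$ sphere, where in the $\Ran$ picture the facets are indexed by ordered pairs $(a,b)$ with $a\neq b$ in $\{0,\ldots,n\}$. This gives each ordered pair $(i,j)$ a label $(a,b)$, with $(j,i)$ labelled $(b,a)$. I first want a key lemma: two distinct points $\varx^{(j)},\varx^{(l)}$ cannot both lie on the same facet $F_{ab}$ of the sphere around $\varx^{(i)}$ unless they have the same $a$-to-$b$ gap relative to $\varx^{(i)}$. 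The lemma should then force combinatorial restrictions. The intended conclusion is to assign to each point $\varx^{(i)}$ a subset $A_i\subseteq\{0,\ldots,n\}$, namely the coordinates where, after a suitable normalization, it is ``high''. These subsets should form an antichain in the Boolean lattice $2^{\{0,\ldots,n\}}$. Sperner's theorem then gives $m\leq\binom{n+1}{\lfloor (n+1)/2\rfloor}$. For small $n$ a direct version may be easier: order the points by a coordinate, and show that $x^{(i)}_a - x^{(j)}_a\in\{\text{max},\text{min},\text{middle}\}$ patterns are constrained.

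For $n=2$ I will do a hands-on version. The sphere of radius $2$ is a hexagon. A third point must lie on the hexagon around $\varx^{(1)}$ and also on the one around $\varx^{(2)}$. Suppose there were four points. Translate so that $\varx^{(1)}=\Zero$. The other three lie on the hexagon $2S^1_{\mathrm{tr}}$ and are pairwise at distance $2$. Points on the same or adjacent edges of the hexagon have distance less than $2$ unless they are at specific vertices, and $2$-apart pairs must lie on ``complementary'' edges. Checking the six edges then shows that at most two points can be pairwise at distance $2$ on the hexagon, which gives $m\leq 3$. For $n=3$ the analogous case analysis is on the rhombic dodecahedron with $12$ facets. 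There I would show that at most $5$ points on $2S^2_{\mathrm{tr}}$ are pairwise at distance $2$, using the facet labels. The idea is that the facets used by a pairwise-$2$ set on the sphere have ordered labels $(a,b)$ whose sets $\{a\}$ and $\{b\}$ form a compatible configuration. I expect this to reduce to a small antichain count in $2^{\{0,1,2,3\}}$ excluding one element.

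The main obstacle is this $n=3$ step: making the facet-label bookkeeping rigorous so that it yields an antichain (or a bounded configuration) instead of an unwieldy case split. I would first try to prove the general Sperner-type injection $\varx^{(i)}\mapsto A_i$ directly. The candidate is to take $A_i$ to be the set of coordinates where $\varx^{(i)}$ attains the maximum of the coordinatewise differences against a fixed reference, after using the integrality reduction from the proposition showing $\eqdim{\Rn}=\eqdim{\Zn}$. If that fails, I would fall back to the explicit finite case analysis in dimension $3$.
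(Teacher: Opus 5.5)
\textbf{The $n=3$ upper bound is not proved.} Your lower bounds are fine, and so is the normalization: put one point at $\Zero$, and the others then lie on the sphere of radius $2$, pairwise at distance $2$. But there is no argument for $\eqdim{\RR^3}\le 6$, and neither route you sketch gets there.

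\emph{The antichain injection.} Injecting an arbitrary equilateral set into an antichain of $2^{\{0,\ldots,n\}}$ would prove Conjecture~\ref{conj:edim} in every dimension, and the paper records that conjecture as open. Your concrete candidate also fails on the extremal configuration itself. Take $n=3$, the points $\mathbf{1}_I$ with $\abs{I}=2$, and $\mathbf{1}_{\{0,1\}}$ as the reference point. Then $\mathbf{1}_{\{0,2\}}$ and $\mathbf{1}_{\{1,2\}}$ both receive $A=\{2\}$.

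\emph{The reduction to an antichain count.} The hope that the facet bookkeeping reduces to ``an antichain count in $2^{\{0,1,2,3\}}$ minus one element'' conflates your situation with Theorem~\ref{thm:edim_ball}. There the points are at distance $2R$ on a sphere of radius $R$; this forces each pair onto opposite facets and lets one push points to vertices. Here the points are at distance $R$ on a sphere of radius $R=2$, and none of that holds. For example, $(2,\tfrac32,\tfrac12)$ and $(2,\tfrac12,\tfrac32)$ both lie in the relative interior of the same facet $\{x_1=2\}$ of $2B^3_{\mathrm{tr}}$ and are at distance $2$ from each other; such configurations come in continuous families.

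\emph{The key lemma.} It has no content: every point of the facet $F_{ab}$ of the radius-$2$ sphere has $a$-to-$b$ gap exactly $2$ by definition. What remains is an unspecified case analysis on the rhombic dodecahedron, and that is the whole difficulty.

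\textbf{What the paper uses instead, and the $n=2$ step.} The paper does not attack the $n=3$ bound directly. The linear map $T(x_1,x_2,x_3)=(x_1+x_2,x_1+x_3,x_2+x_3)$ satisfies $\tnorm{T\varx}=\max_{i<j}\abs{x_i\pm x_j}$. Hence $(\RR^3,\tnorm{\cdot})$ is linearly isometric to $\RR^3$ with the rhombic-dodecahedral norm, and Sch\"urmann and Swanepoel \cite{SchS06} proved that this norm has equilateral number $6$. A self-contained proof would have to replace that theorem.

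For $n=2$ your route is the paper's, but your intermediate claim is false. The points $(2,1)$ and $(1,2)$ lie in the relative interiors of adjacent edges of the radius-$2$ hexagon and are at distance $2$; more generally, $(2,t)$ and $(2-t,2)$ are at distance $2$. A correct replacement: measure boundary arcs in tropical length, so the perimeter is $12$. Two points of this hexagon are at distance $2$ exactly when the shorter arc between them has length $2$. Three points on a circle of length $12$ cannot be pairwise at arc distance exactly $2$.
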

\begin{proof}
	It is easy to see that $\eqdim{\RR^2} < 4$. Let one point $\varp_0$ be at the center of the tropical unit ball (Figure~\ref{fig:tr_balls}) and three points $\varp_1, \varp_2$ and $\varp_3$ on the boundary, set in clockwise direction.
	These 3 points are on 3 adjacent edges, each edge of tropical length 1. But the boundary consists of 6 edges, so the tropical distance between $\varp_1$ and $\varp_3$ is greater than 1.
	In fact, it is known for a general Minkowski space (finite dimensional Banach space) $X$ of dimension 2 that the equilateral dimension of $X$ is either 4 (when the unit ball of $X$ is a parallelogram) or 3 (otherwise) (see, e.g. \cite{Swan04}).
	 
	Next, we show that $\eqdim{\RR^3} = 6$.
	Given $\varx=(x_1,x_2,x_3) \in \RR^3$, its tropical norm is
	$$
		\tnorm{\varx} = \max \{\lvert x_1 \rvert, \lvert x_2 \rvert, \lvert x_3 \rvert, \lvert x_1-x_2 \rvert, \lvert x_1-x_3 \rvert, \lvert x_2-x_3 \rvert\}.
	$$
	Let $T : \RR^3 \to \RR^3$ be the linear isomorphism
	$$
		T(x_1,x_2,x_3) = (x_1 + x_2, x_1 + x_3, x_2 + x_3).
	$$	
	Then
	$$
		\tnorm{T(x_1,x_2,x_3)} = \max \{\lvert x_1 \pm x_2 \rvert, \lvert x_1 \pm x_3 \rvert, \lvert x_2 \pm x_3 \rvert\}.
	$$
	It follows that $\RR^3$ with the tropical norm is linearly isometric to $\RR^3$ with the norm $\parallel \cdot \parallel_{rd}$ defined by
	$$
		\parallel (x_1,x_2,x_3) \parallel_{rd} \; = \max \{\lvert x_1 \pm x_2 \rvert, \lvert x_1 \pm x_3 \rvert, \lvert x_2 \pm x_3 \rvert\},
	$$
	whose unit ball is a rhombic dodecahedron. But Sch\"urmann and Swanepoel
	\cite{SchS06} showed that the equilateral dimension of $\RR^3$ with respect to this norm is 6 and since the equilateral dimension is preserved by linear isometries then $\eqdim{\RR^3} = 6$. 
\end{proof}

\begin{conjecture}[Swanepoel \cite{Swan07}]
	\label{conj:edim}
	The tropical equilateral dimension of $\Rn$ is 
	$$\displaystyle \eqdim{\Rn} = \binom{n+1}{\lfloor (n+1)/2 \rfloor}.$$
\end{conjecture}
\begin{remarks}
	\begin{enumerate}
		\item The binomial coefficient in Conjecture \ref{conj:edim} is the one that appears in the antichain theorem of Sperner \cite{Sperner28} and the set-pair theorem of Bollob\'as \cite{Bol65}.
		\item By Petty \cite{Petty71} the points of a maximal equilateral set $S$ are extreme points of the convex hull $H$ of $S$ and the line through each pair of points of $S$ is orthogonal to two parallel hyperplanes through these points that are also supporting hyperplanes for $H$.
		\item Petty \cite{Petty71} and independently Soltan \cite{Soltan75}, based on Danzer and Gr\"unbaum \cite{DG62}, showed that the equilateral dimension is at most $2^n$. The same upper bound was obtained by F\"uredi, Lagarias and Morgan \cite{FLM91}). This upper bound is achieved if and only if the convex hull of $S$ is a ball of the normed space in the form of a parallelotope with the set $S$ being its set of vertices. Since, for $n>1$, the tropical ball $\Bn$ is not a parallelotope then $$\eqdim{\Rn} < 2^n.$$
		\item The conjecture is supported by some experiments conducted on $\ZZ^4$ and $\ZZ^5$ with several low tropical distances $d$, where $d$ should be even in order to achieve the bound. 
	\end{enumerate}
\end{remarks}

We have seen in the proof of Proposition~\ref{prop:lower_edim} that the lower bound is achieved with an equilateral set of tropical distance 2 consisting of points on the boundary of the tropical unit ball. Next we show that under these conditions this is also an upper bound. 
\begin{theorem}[Swanepoel \cite{Swan07}]
	\label{thm:edim_ball}
	Let $S$ be an equilateral set of tropical distance $2R$ of maximum size that is contained in a tropical sphere of radius $R$. Then
	$$\vert S \vert = \binom{n+1}{\lfloor (n+1)/2 \rfloor}.$$
\end{theorem}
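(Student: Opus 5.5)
The plan is to reduce the statement to Sperner's antichain theorem; the matching lower bound is already given by the construction in Proposition~\ref{prop:lower_edim}. By translation and rescaling we may assume the sphere is $\San$, centred at the origin, and that the distance is $2$. We work in homogeneous coordinates in $\Projn$, normalised so that $\min_k x_k=0$. A point $\varx$ then lies on the unit sphere exactly when $\max_k x_k - \min_k x_k = 1$, i.e.\ $\varx\in[0,1]^{n+1}$ with some coordinate equal to $0$ and some equal to $1$. To each $\varx\in S$ attach the pair of disjoint nonempty proper subsets
$$A(\varx)=\{k: x_k=1\},\qquad Z(\varx)=\{k : x_k=0\}.$$

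The first step characterises when two sphere points are at distance $2$. For $\varx,\vary$ on the sphere, every $x_i-y_i$ lies in $[-1,1]$, so
$$d_{\mathrm{tr}}(\varx,\vary)=\max_i(x_i-y_i)+\max_j(y_j-x_j)\le 2,$$
with equality if and only if there are indices $i,j$ with $x_i=1$, $y_i=0$ and $y_j=1$, $x_j=0$. In set terms this says
$$A(\varx)\cap Z(\vary)\neq\emptyset \quad\text{and}\quad A(\vary)\cap Z(\varx)\neq\emptyset .$$
This is exactly the antipodality-through-the-centre remark of Section~\ref{sec:background}, rewritten combinatorially.

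Next we pass to the cross-intersecting set-pair condition. For each $\varx\in S$ put $B(\varx)=\overline{Z(\varx)}$, the complement of $Z(\varx)$ in $[1..n+1]$. Then $A(\varx)\subseteq B(\varx)$, and the condition $A(\varx)\cap Z(\vary)\neq\emptyset$ becomes $A(\varx)\not\subseteq B(\vary)$. So the family of pairs $\{(A(\varx),B(\varx))\}_{\varx\in S}$ satisfies $A_s\subseteq B_s$ and $A_s\not\subseteq B_t$ for all $s\neq t$.

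This is the skew/nested version of the Bollob\'as set-pair theorem (equivalently, the LYM-type inequality for ``interval'' families). It gives
$$\sum_s \binom{\lvert B_s\rvert-\lvert A_s\rvert}{\ldots}\le\ldots$$
or, more usefully, the direct Sperner-type bound
$$\lvert S\rvert\le\binom{n+1}{\lfloor (n+1)/2\rfloor}.$$
To prove it cleanly I would use the LYM permutation argument. For a random permutation $\sigma$ of $[1..n+1]$, say the pair $s$ is \emph{caught} if some prefix $P$ of $\sigma$ satisfies $A_s\subseteq P\subseteq B_s$ (with $P$ of one fixed length, chosen per $s$). Two distinct pairs cannot be caught by the same prefix $P$: if $A_s\subseteq P\subseteq B_s$ and $A_t\subseteq P\subseteq B_t$, then $A_s\subseteq P\subseteq B_t$, a contradiction. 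Since the prefixes of $\sigma$ form a chain, each comparability would also force $A_s\subseteq B_t$ or $A_t\subseteq B_s$. So along each chain of prefixes at most one pair is caught.

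The main obstacle is getting exactly the middle binomial and not a weaker bound. The approach I would try first is to choose, for each $s$, a single set $C_s$ with $A_s\subseteq C_s\subseteq B_s$ and $\lvert C_s\rvert$ as close to $(n+1)/2$ as possible. We then show that $\{C_s\}$ is an antichain: $C_s\subseteq C_t$ would give $A_s\subseteq C_s\subseteq C_t\subseteq B_t$, contradicting $A_s\not\subseteq B_t$. The sets $C_s$ are also distinct, since $C_s=C_t$ gives the same contradiction. Sperner's theorem \cite{Sperner28} then bounds the number of sets $C_s$, and hence $\lvert S\rvert$, by $\binom{n+1}{\lfloor (n+1)/2\rfloor}$. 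Any choice of $C_s$ in the interval works, so this avoids the weighted LYM computation altogether.

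Finally one checks the reduction conventions. The chosen representative of each point is unique, and the correspondence $\varx\mapsto(A(\varx),Z(\varx))$ is injective on $S$, so $\lvert S\rvert$ equals the number of pairs. Combining the upper bound with Proposition~\ref{prop:lower_edim}, whose equilateral set consists of sphere vertices at distance $2$, yields equality.
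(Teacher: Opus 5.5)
Your proof is correct and follows the paper's route. Choosing $C_s$ with $A_s\subseteq C_s\subseteq B_s$ is exactly the paper's step of moving each point of $S$ to a vertex of its minimal face of $\partial\Bn$, namely the $0/1$ vector supported on $C_s$. After that, pairwise incomparability of the supports and Sperner's theorem finish the argument. Your cross-intersection criterion $A(\varx)\cap Z(\vary)\neq\emptyset\neq A(\vary)\cap Z(\varx)$ also makes rigorous, for all points simultaneously, the paper's informal claim that distances and distinctness survive this move. The unfinished LYM/set-pair digression, with its placeholder inequality, is never used and should be deleted.
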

\begin{proof}
	By translating and rescaling we may assume, w.l.o.g., that $S \subset \partial \Bn$, the boundary of the unit ball.
	Each point $\varp \in S$ belongs to a face $f$ of minimal dimension, which is the intersection of all the facets to which $\varp$ belongs.
	Each of the other points belongs to at least one facet that is antipodal to a facet to which $\varp$ belongs. It follows that if $\varp$ is not a vertex of $\Bn$ then by moving it to a vertex of $f$ not only its tropical distance from all the other points is preserved but potentially we can add more points to the equilateral set (in case $S$ were not maximal) since now $\varp$ is contained in more facets.
	It follows that we may assume that a maximal equilateral set $S$ consists of vertices of $\Bn$.
	
	Let us now write each of these vertex points in homogeneous coordinates and, if necessary, add constants so that each point $\varp_i$ has $0 < k_i \leq n+1$ ones and $n+1-k_i$ zeros. But if $I \subseteq [n+1]$ is the set of indices of ones in $\varp_i$ and $J$ is such a set in $\varp_j$ then $d_{\mathrm{tr}}(\varp_i, \varp_j)=2$ if and only if $I \nsubseteq J$ and $J \nsubseteq I$. By Sperner's theorem \cite{Sperner28}, the size of this antichain of indices $I$ is at most $\displaystyle \binom{n+1}{\lfloor (n+1)/2 \rfloor}$,
	and we saw in the proof of Proposition~\ref{prop:lower_edim} that this upper bound is indeed achieved. 
\end{proof}
\begin{remark}
	In general, when an equilateral set of tropical distance $d$ belongs to a tropical sphere then the radius of the sphere is $\frac{d}{2} \leq R \leq d$. For example, the equilateral set $\{(0,0), (1,0), (1,1)\}$ of tropical distance 1 is on the boundary of a tropical ball of radius $\frac{2}{3}$ (and not $\half$) with center at $(\frac{2}{3}, \frac{1}{3})$ (the centroid, in this case).
\end{remark}
A related question is the following.
\begin{question}
	Does every (maximal) equilateral set lie on a tropical sphere?
\end{question}
\begin{remark}
	In $\Rn$ with the Euclidean metric the answer to this question is positive.
	An equilateral set of three points in a Banach space always lies on a sphere, but this is not the case anymore for an equilateral set of size 4 \cite{BCP93}.
	We are not aware of an answer to this question in the tropical setting.
\end{remark}
\vspace{5mm}
\noindent
{\bf Acknowledgments.} We thank K. Swanepoel for bringing to our attention that the results about the equilateral dimension already appeared in his works.

\bibliographystyle{plain}
\bibliography{Tropical_chromatic_number}

\begin{thebibliography}{10}

\bibitem{AM83}
Noga Alon and Vitali~D. Milman.
\newblock Embedding of {$l\sp{k}\sb{\infty }$}\ in finite-dimensional {B}anach
  spaces.
\newblock {\em Israel J. Math.}, 45(4):265--280, 1983.

\bibitem{AP03}
Noga Alon and Pavel Pudl\'ak.
\newblock Equilateral sets in {$l^n_p$}.
\newblock {\em Geom. Funct. Anal.}, 13(3):467--482, 2003.

\bibitem{BBMP19}
Christine Bachoc, Thomas Bellitto, Philippe Moustrou, and Arnaud P\^echer.
\newblock On the density of sets avoiding parallelohedron distance 1.
\newblock {\em Discrete Comput. Geom.}, 62(3):497--524, 2019.

\bibitem{BCP93}
M.~Baronti, E.~Casini, and P.~L. Papini.
\newblock Equilateral sets and their central points.
\newblock {\em Rend. Mat. Appl. (7)}, 13(1):133--148, 1993.

\bibitem{Cadical24}
Armin Biere, Tobias Faller, Katalin Fazekas, Mathias Fleury, Nils Froleyks, and
  Florian Pollitt.
\newblock {CaDiCaL 2.0}.
\newblock In Arie Gurfinkel and Vijay Ganesh, editors, {\em Computer Aided
  Verification - 36th International Conference, {CAV} 2024, Montreal, QC,
  Canada, July 24-27, 2024, Proceedings, Part {I}}, volume 14681 of {\em
  Lecture Notes in Computer Science}, pages 133--152. Springer, 2024.

\bibitem{Bol65}
B\'ela Bollob\'as.
\newblock On generalized graphs.
\newblock {\em Acta Math. Acad. Sci. Hungar.}, 16:447--452, 1965.

\bibitem{But10}
Peter Butkovi{\v{c}}.
\newblock {\em Max-linear systems: theory and algorithms}.
\newblock Springer Monographs in Mathematics. Springer-Verlag London, Ltd.,
  London, 2010.

\bibitem{CKR18}
Danila Cherkashin, Anatoly Kulikov, and Andrei Raigorodskii.
\newblock On the chromatic numbers of small-dimensional {E}uclidean spaces.
\newblock {\em Discrete Appl. Math.}, 243:125--131, 2018.

\bibitem{CGQ04}
Guy Cohen, St\'ephane Gaubert, and Jean-Pierre Quadrat.
\newblock Duality and separation theorems in idempotent semimodules.
\newblock {\em Linear Algebra Appl.}, 379:395--422, 2004.

\bibitem{CJS22}
Francisco Criado, Michael Joswig, and Francisco Santos.
\newblock Tropical bisectors and {V}oronoi diagrams.
\newblock {\em Found. Comput. Math.}, 22(6):1923--1960, 2022.

\bibitem{DG62}
L.~Danzer and B.~Gr\"unbaum.
\newblock \"uber zwei {P}robleme bez\"uglich konvexer {K}\"orper von {P}.
  {E}rdős und von {V}. {L}. {K}lee.
\newblock {\em Math. Z.}, 79:95--99, 1962.

\bibitem{deBE51}
Nicolass~G. de~Bruijn and Paul Erd\"os.
\newblock A colour problem for infinite graphs and a problem in the theory of
  relations.
\newblock {\em Indag. Math.}, 13:369--373, 1951.
\newblock Nederl. Akad. Wetensch. Proc. Ser. A {\bf 54}.

\bibitem{deG18}
Aubrey D. N.~J. de~Grey.
\newblock The chromatic number of the plane is at least 5.
\newblock {\em Geombinatorics}, 28(1):18--31, 2018.

\bibitem{Pue14}
Mar\'ia~Jes\'us de~la Puente.
\newblock Distances on the tropical line determined by two points.
\newblock {\em Kybernetika (Prague)}, 50(3):408--435, 2014.

\bibitem{FLM91}
Zolt\'an F\"uredi, Jeffrey~C. Lagarias, and Frank Morgan.
\newblock Singularities of minimal surfaces and networks and related extremal
  problems in {M}inkowski space.
\newblock In {\em Discrete and computational geometry ({N}ew {B}runswick, {NJ},
  1989/1990)}, volume~6 of {\em DIMACS Ser. Discrete Math. Theoret. Comput.
  Sci.}, pages 95--109. Amer. Math. Soc., Providence, RI, 1991.

\bibitem{Guy83}
Richard~K. Guy.
\newblock Unsolved {P}roblems: {A}n {O}lla-{P}odrida of {O}pen {P}roblems,
  {O}ften {O}ddly {P}osed.
\newblock {\em Amer. Math. Monthly}, 90(3):196--200, 1983.

\bibitem{HH18}
Q.~Huangfu and J.~A.~J. Hall.
\newblock Parallelizing the dual revised simplex method.
\newblock {\em Mathematical Programming Computation}, 10(1):119--142, 2018.

\bibitem{Kup11}
Andrey Kupavskiy.
\newblock On the chromatic number of {$\Bbb R^n$} with an arbitrary norm.
\newblock {\em Discrete Math.}, 311(6):437--440, 2011.

\bibitem{LR72}
D.~G. Larman and C.~A. Rogers.
\newblock The realization of distances within sets in {E}uclidean space.
\newblock {\em Mathematika}, 19:1--24, 1972.

\bibitem{MS15}
Diane Maclagan and Bernd Sturmfels.
\newblock {\em Introduction to tropical geometry}, volume 161 of {\em Graduate
  Studies in Mathematics}.
\newblock American Mathematical Society, Providence, RI, 2015.

\bibitem{MSW01}
Horst Martini, Konrad~J. Swanepoel, and Gunter Wei\ss.
\newblock The geometry of {M}inkowski spaces---a survey. {I}.
\newblock {\em Expo. Math.}, 19(2):97--142, 2001.

\bibitem{MM61}
Leo Moser and William Moser.
\newblock Solution to problem 10.
\newblock {\em Can. Math. Bull}, 4:187--189, 1961.

\bibitem{Paarts20}
Jaan Parts.
\newblock Graph minimization, focusing on the example of 5-chromatic
  unit-distance graphs in the plane.
\newblock {\em Geombinatorics}, 29(4):137--166, 2020.

\bibitem{Petty71}
Clinton~M. Petty.
\newblock Equilateral sets in {M}inkowski spaces.
\newblock {\em Proc. Amer. Math. Soc.}, 29:369--374, 1971.

\bibitem{Ra00}
Andrei~M. Raigorodskii.
\newblock On the chromatic number of a space.
\newblock {\em Uspekhi Mat. Nauk}, 55(2(332)):147--148, 2000.

\bibitem{Ros26}
Amnon Rosenmann.
\newblock Tropical balls, geodesics and honeycomb.
\newblock 2026.
\newblock arXiv:2601.14447.

\bibitem{SchS06}
Achill Sch\"urmann and Konrad~J. Swanepoel.
\newblock Three-dimensional antipodal and norm-equilateral sets.
\newblock {\em Pacific J. Math.}, 228(2):349--370, 2006.

\bibitem{Soi09}
Alexander Soifer.
\newblock {\em The mathematical coloring book}.
\newblock Springer, New York, 2009.

\bibitem{Soltan75}
Petru~S. Soltan.
\newblock Analogues of regular simplexes in normed spaces.
\newblock {\em Dokl. Akad. Nauk SSSR}, 222(6):1303--1305, 1975.

\bibitem{Sperner28}
Emanuel Sperner.
\newblock Ein {S}atz \"uber {U}ntermengen einer endlichen {M}enge.
\newblock {\em Math. Z.}, 27(1):544--548, 1928.

\bibitem{Swan04}
Konrad~J. Swanepoel.
\newblock Equilateral sets in finite-dimensional normed spaces.
\newblock In {\em Seminar of {M}athematical {A}nalysis}, volume~71 of {\em
  Colecc. Abierta}, pages 195--237. Univ. Sevilla Secr. Publ., Seville, 2004.

\bibitem{Swan07}
Konrad~J. Swanepoel.
\newblock The local {S}teiner problem in finite-dimensional normed spaces.
\newblock {\em Discrete Comput. Geom.}, 37(3):419--442, 2007.

\bibitem{Swan18}
Konrad~J. Swanepoel.
\newblock Combinatorial distance geometry in normed spaces.
\newblock In {\em New trends in intuitive geometry}, volume~27 of {\em Bolyai
  Soc. Math. Stud.}, pages 407--458. J\'anos Bolyai Math. Soc., Budapest, 2018.

\bibitem{SV08}
Konrad~J. Swanepoel and Rafael Villa.
\newblock A lower bound for the equilateral number of normed spaces.
\newblock {\em Proc. Amer. Math. Soc.}, 136(1):127--131, 2008.

\end{thebibliography}

\appendix
\section{A graph in $\mathbb{Z}^4$ that is not 10-colorable}
\label{sec:Appendix A}
The graph contains 37 vertices and 386 edges between vertices of tropical distance 2.
\begin{longtable}{|c|c|c|}
	\hline
	Index & Vertex & Color \\
	\hline
	\endfirsthead
	
	\hline
	\endlastfoot
	
	1 & (1,1,0,0) & 0 \\
	2 & (1,0,1,0) & 1 \\
	3 & (1,0,0,1) & 2 \\
	4 & (0,-1,-1,-1) & 3 \\
	5 & (0,1,1,0) & 4 \\
	6 & (0,1,0,1) & 5 \\
	7 & (-1,0,-1,-1) & 6 \\
	8 & (0,0,1,1) & 7 \\
	9 & (-1,-1,0,-1) & 8 \\
	10 & (-1,-1,-1,0) & 9 \\
	11 & (2,2,2,1) & 6 \\
	12 & (2,2,1,2) & 8 \\
	13 & (2,1,2,2) & 3 \\
	14 & (1,2,2,2) & 9 \\
	15 & (1,0,0,0) & 0 \\
	16 & (0,1,0,0) & 4 \\
	17 & (0,0,1,0) & 1 \\
	18 & (0,0,0,1) & 5 \\
	19 & (-1,-1,-1,-1) & 3 \\
	20 & (2,2,1,1) & 10 \\
	21 & (2,1,2,1) & 3 \\
	22 & (2,1,1,2) & 8 \\
	23 & (1,2,2,1) & 6 \\
	24 & (1,2,1,2) & 9 \\
	25 & (1,1,2,2) & 7 \\
	26 & (1,1,0,-1) & 2 \\
	27 & (1,1,-1,0) & 1 \\
	28 & (1,0,1,-1) & 7 \\
	29 & (1,0,-1,1) & 2 \\
	30 & (1,-1,1,0) & 4 \\
	31 & (1,-1,0,1) & 6 \\
	32 & (0,1,1,-1) & 5 \\
	33 & (0,1,-1,1) & 7 \\
	34 & (0,-1,1,1) & 10 \\
	35 & (-1,1,1,0) & 2 \\
	36 & (-1,1,0,1) & 10 \\
	37 & (-1,0,1,1) & 0 \\
\end{longtable}

{\small
	\setlength{\tabcolsep}{3pt}
	\begin{longtable}{|c|c|c|c|c|c|c|c|}
		\hline
		\multicolumn{8}{|c|}{Edges} \\
		\hline
		\endfirsthead
		
		\hline
		\endlastfoot
		
		(1,2) & (1,3) & (1,4) & (1,5) & (1,6) & (1,7) & (1,8) & (1,9) \\
		(1,10) & (1,11) & (1,12) & (1,13) & (1,14) & (1,17) & (1,18) & (1,19) \\
		(1,21) & (1,22) & (1,23) & (1,24) & (1,25) & (1,28) & (1,29) & (1,32) \\
		(1,33) & (2,3) & (2,4) & (2,5) & (2,6) & (2,7) & (2,8) & (2,9) \\
		(2,10) & (2,11) & (2,12) & (2,13) & (2,14) & (2,16) & (2,18) & (2,19) \\
		(2,20) & (2,22) & (2,23) & (2,24) & (2,25) & (2,26) & (2,31) & (2,32) \\
		(2,34) & (3,4) & (3,5) & (3,6) & (3,7) & (3,8) & (3,9) & (3,10) \\
		(3,11) & (3,12) & (3,13) & (3,14) & (3,16) & (3,17) & (3,19) & (3,20) \\
		(3,21) & (3,23) & (3,24) & (3,25) & (3,27) & (3,30) & (3,33) & (3,34) \\
		(4,5) & (4,6) & (4,7) & (4,8) & (4,9) & (4,10) & (4,16) & (4,17) \\
		(4,18) & (4,26) & (4,27) & (4,28) & (4,29) & (4,30) & (4,31) & (4,32) \\
		(4,33) & (4,34) & (5,6) & (5,7) & (5,8) & (5,9) & (5,10) & (5,11) \\
		(5,12) & (5,13) & (5,14) & (5,15) & (5,18) & (5,19) & (5,20) & (5,21) \\
		(5,22) & (5,24) & (5,25) & (5,26) & (5,28) & (5,36) & (5,37) & (6,7) \\
		(6,8) & (6,9) & (6,10) & (6,11) & (6,12) & (6,13) & (6,14) & (6,15) \\
		(6,17) & (6,19) & (6,20) & (6,21) & (6,22) & (6,23) & (6,25) & (6,27) \\
		(6,29) & (6,35) & (6,37) & (7,8) & (7,9) & (7,10) & (7,15) & (7,17) \\
		(7,18) & (7,26) & (7,27) & (7,28) & (7,29) & (7,32) & (7,33) & (7,35) \\
		(7,36) & (7,37) & (8,9) & (8,10) & (8,11) & (8,12) & (8,13) & (8,14) \\
		(8,15) & (8,16) & (8,19) & (8,20) & (8,21) & (8,22) & (8,23) & (8,24) \\
		(8,30) & (8,31) & (8,35) & (8,36) & (9,10) & (9,15) & (9,16) & (9,18) \\
		(9,26) & (9,28) & (9,30) & (9,31) & (9,32) & (9,34) & (9,35) & (9,36) \\
		(9,37) & (10,15) & (10,16) & (10,17) & (10,27) & (10,29) & (10,30) & (10,31) \\
		(10,33) & (10,34) & (10,35) & (10,36) & (10,37) & (11,12) & (11,13) & (11,14) \\
		(11,15) & (11,16) & (11,17) & (11,18) & (11,22) & (11,24) & (11,25) & (11,26) \\
		(11,28) & (11,32) & (12,13) & (12,14) & (12,15) & (12,16) & (12,17) & (12,18) \\
		(12,21) & (12,23) & (12,25) & (12,27) & (12,29) & (12,33) & (13,14) & (13,15) \\
		(13,16) & (13,17) & (13,18) & (13,20) & (13,23) & (13,24) & (13,30) & (13,31) \\
		(13,34) & (14,15) & (14,16) & (14,17) & (14,18) & (14,20) & (14,21) & (14,22) \\
		(14,35) & (14,36) & (14,37) & (15,16) & (15,17) & (15,18) & (15,19) & (15,20) \\
		(15,21) & (15,22) & (15,23) & (15,24) & (15,25) & (15,26) & (15,27) & (15,28) \\
		(15,29) & (15,30) & (15,31) & (15,32) & (15,33) & (15,34) & (16,17) & (16,18) \\
		(16,19) & (16,20) & (16,21) & (16,22) & (16,23) & (16,24) & (16,25) & (16,26) \\
		(16,27) & (16,28) & (16,29) & (16,32) & (16,33) & (16,35) & (16,36) & (16,37) \\
		(17,18) & (17,19) & (17,20) & (17,21) & (17,22) & (17,23) & (17,24) & (17,25) \\
		(17,26) & (17,28) & (17,30) & (17,31) & (17,32) & (17,34) & (17,35) & (17,36) \\
		(17,37) & (18,19) & (18,20) & (18,21) & (18,22) & (18,23) & (18,24) & (18,25) \\
		(18,27) & (18,29) & (18,30) & (18,31) & (18,33) & (18,34) & (18,35) & (18,36) \\
		(18,37) & (19,26) & (19,27) & (19,28) & (19,29) & (19,30) & (19,31) & (19,32) \\
		(19,33) & (19,34) & (19,35) & (19,36) & (19,37) & (20,21) & (20,22) & (20,23) \\
		(20,24) & (20,25) & (20,26) & (20,27) & (20,28) & (20,29) & (20,32) & (20,33) \\
		(21,22) & (21,23) & (21,24) & (21,25) & (21,26) & (21,28) & (21,30) & (21,31) \\
		(21,32) & (21,34) & (22,23) & (22,24) & (22,25) & (22,27) & (22,29) & (22,30) \\
		(22,31) & (22,33) & (22,34) & (23,24) & (23,25) & (23,26) & (23,28) & (23,32) \\
		(23,35) & (23,36) & (23,37) & (24,25) & (24,27) & (24,29) & (24,33) & (24,35) \\
		(24,36) & (24,37) & (25,30) & (25,31) & (25,34) & (25,35) & (25,36) & (25,37) \\
		(26,27) & (26,28) & (26,32) & (27,29) & (27,33) & (28,30) & (28,32) & (29,31) \\
		(29,33) & (30,31) & (30,34) & (31,34) & (32,35) & (33,36) & (34,37) & (35,36) \\
		(35,37) & (36,37) &  &  &  &  &  &
		\label{tab:edges-11-color-dim4}
	\end{longtable}
}
\vspace{4mm}
CaDiCaL was very slow when trying to prove that the graph is not 10-colorable. However, when added the fact that the subset of vertices $\{1,2,3,5,6,8,11,12,13,14\}$ forms a clique (they are of tropical distance 2 from one another) then the result was UNSATISFIABLE after 0.01 seconds.
\vspace{4mm}

CaDiCaL output on 11-coloring:
\begin{verbatim}
c parsing ...
c done (0.327 s)
c solving ...
c done (0.176 s)
s SATISFIABLE
\end{verbatim}
\vspace{4mm}

\section{A graph in $\mathbb{Z}^3$ that is not 7-colorable}
\label{sec:Appendix B}
The graph contains 62 vertices and 577 edges between vertices of tropical distance 2.
\begin{longtable}{|c|c|c|}
	\hline
	Index & Vertex & Color \\
	\hline
	\endfirsthead
	\hline
	\endlastfoot
	1 & $(0,1,0)$ & 0\\
	2 & $(0,1,1)$ & 0\\
	3 & $(0,1,2)$ & 6\\
	4 & $(0,2,0)$ & 6\\
	5 & $(0,2,1)$ & 1\\
	6 & $(0,3,1)$ & 5\\
	7 & $(0,3,2)$ & 1\\
	8 & $(1,0,0)$ & 7\\
	9 & $(1,0,1)$ & 2\\
	10 & $(1,1,0)$ & 7\\
	11 & $(1,1,1)$ & 0\\
	12 & $(1,1,2)$ & 4\\
	13 & $(1,1,3)$ & 6\\
	14 & $(1,2,0)$ & 4\\
	15 & $(1,2,1)$ & 0\\
	16 & $(1,2,2)$ & 3\\
	17 & $(1,2,3)$ & 6\\
	18 & $(1,3,1)$ & 6\\
	19 & $(1,3,2)$ & 3\\
	20 & $(1,3,3)$ & 7\\
	21 & $(1,3,4)$ & 5\\
	22 & $(1,4,1)$ & 5\\
	23 & $(1,4,2)$ & 5\\
	24 & $(1,4,3)$ & 7\\
	25 & $(2,0,0)$ & 1\\
	26 & $(2,0,2)$ & 3\\
	27 & $(2,1,2)$ & 4\\
	28 & $(2,1,3)$ & 4\\
	29 & $(2,2,0)$ & 3\\
	30 & $(2,2,1)$ & 5\\
	31 & $(2,2,2)$ & 5\\
	32 & $(2,2,3)$ & 4\\
	33 & $(2,2,4)$ & 2\\
	34 & $(2,3,1)$ & 4\\
	35 & $(2,3,2)$ & 2\\
	36 & $(2,3,3)$ & 3\\
	37 & $(2,4,2)$ & 2\\
	38 & $(2,4,3)$ & 2\\
	39 & $(2,4,4)$ & 7\\
	40 & $(3,1,1)$ & 6\\
	41 & $(3,1,2)$ & 6\\
	42 & $(3,1,3)$ & 3\\
	43 & $(3,2,0)$ & 3\\
	44 & $(3,2,1)$ & 3\\
	45 & $(3,2,2)$ & 5\\
	46 & $(3,2,4)$ & 7\\
	47 & $(3,3,1)$ & 3\\
	48 & $(3,3,3)$ & 1\\
	49 & $(3,3,4)$ & 0\\
	50 & $(3,4,1)$ & 7\\
	51 & $(3,4,2)$ & 7\\
	52 & $(3,4,3)$ & 2\\
	53 & $(3,4,4)$ & 0\\
	54 & $(4,2,2)$ & 2\\
	55 & $(4,2,3)$ & 2\\
	56 & $(4,2,4)$ & 7\\
	57 & $(4,3,2)$ & 0\\
	58 & $(4,3,3)$ & 1\\
	59 & $(4,3,4)$ & 7\\
	60 & $(4,4,2)$ & 0\\
	61 & $(4,4,3)$ & 1\\
	62 & $(4,4,4)$ & 6
	\label{tab:62_vertices}
\end{longtable}


{\small
	\setlength{\tabcolsep}{3pt}
	\begin{longtable}{|c|c|c|c|c|c|c|c|}
		\hline
		\multicolumn{8}{|c|}{Edges} \\
		\hline
		\endfirsthead
		\hline
		\endlastfoot
		(1,3) & (1,6) & (1,7) & (1,8) & (1,9) & (1,12) & (1,16) & (1,18) \\
		(1,19) & (1,27) & (1,29) & (1,30) & (1,31) & (1,34) & (1,35) & (2,4) \\
		(2,6) & (2,7) & (2,8) & (2,9) & (2,10) & (2,13) & (2,14) & (2,17) \\
		(2,18) & (2,19) & (2,20) & (2,27) & (2,28) & (2,30) & (2,31) & (2,32) \\
		(2,34) & (2,35) & (2,36) & (3,5) & (3,7) & (3,9) & (3,11) & (3,15) \\
		(3,19) & (3,20) & (3,21) & (3,27) & (3,28) & (3,31) & (3,32) & (3,33) \\
		(3,35) & (3,36) & (4,7) & (4,10) & (4,11) & (4,16) & (4,19) & (4,22) \\
		(4,23) & (4,29) & (4,30) & (4,31) & (4,34) & (4,35) & (4,37) & (5,10) \\
		(5,11) & (5,12) & (5,14) & (5,17) & (5,20) & (5,22) & (5,23) & (5,24) \\
		(5,30) & (5,31) & (5,32) & (5,34) & (5,35) & (5,36) & (5,37) & (5,38) \\
		(6,14) & (6,15) & (6,16) & (6,20) & (6,24) & (6,34) & (6,35) & (6,36) \\
		(6,37) & (6,38) & (7,15) & (7,16) & (7,17) & (7,18) & (7,21) & (7,22) \\
		(7,35) & (7,36) & (7,37) & (7,38) & (7,39) & (8,12) & (8,14) & (8,15) \\
		(8,16) & (8,26) & (8,27) & (8,29) & (8,30) & (8,31) & (8,40) & (8,41) \\
		(8,43) & (8,44) & (8,45) & (9,10) & (9,13) & (9,15) & (9,16) & (9,17) \\
		(9,25) & (9,28) & (9,30) & (9,31) & (9,32) & (9,40) & (9,41) & (9,42) \\
		(9,44) & (9,45) & (10,12) & (10,16) & (10,18) & (10,19) & (10,25) & (10,27) \\
		(10,31) & (10,34) & (10,35) & (10,40) & (10,41) & (10,43) & (10,44) & (10,45) \\
		(10,47) & (11,13) & (11,14) & (11,17) & (11,18) & (11,19) & (11,20) & (11,25) \\
		(11,26) & (11,28) & (11,29) & (11,32) & (11,34) & (11,35) & (11,36) & (11,40) \\
		(11,41) & (11,42) & (11,44) & (11,45) & (11,47) & (11,48) & (12,15) & (12,19) \\
		(12,20) & (12,21) & (12,26) & (12,30) & (12,33) & (12,35) & (12,36) & (12,41) \\
		(12,42) & (12,45) & (12,46) & (12,48) & (12,49) & (13,16) & (13,20) & (13,21) \\
		(13,26) & (13,27) & (13,31) & (13,36) & (13,42) & (13,46) & (13,48) & (13,49) \\
		(14,16) & (14,19) & (14,22) & (14,23) & (14,31) & (14,35) & (14,37) & (14,43) \\
		(14,44) & (14,45) & (14,47) & (14,50) & (14,51) & (15,17) & (15,20) & (15,22) \\
		(15,23) & (15,24) & (15,27) & (15,29) & (15,32) & (15,36) & (15,37) & (15,38) \\
		(15,44) & (15,45) & (15,47) & (15,48) & (15,50) & (15,51) & (15,52) & (16,18) \\
		(16,21) & (16,23) & (16,24) & (16,27) & (16,28) & (16,30) & (16,33) & (16,34) \\
		(16,37) & (16,38) & (16,39) & (16,45) & (16,46) & (16,48) & (16,49) & (16,51) \\
		(16,52) & (16,53) & (17,19) & (17,24) & (17,27) & (17,28) & (17,31) & (17,35) \\
		(17,38) & (17,39) & (17,46) & (17,48) & (17,49) & (17,52) & (17,53) & (18,20) \\
		(18,24) & (18,29) & (18,30) & (18,31) & (18,36) & (18,38) & (18,47) & (18,48) \\
		(18,50) & (18,51) & (18,52) & (19,21) & (19,22) & (19,30) & (19,31) & (19,32) \\
		(19,34) & (19,39) & (19,48) & (19,49) & (19,51) & (19,52) & (19,53) & (20,23) \\
		(20,31) & (20,32) & (20,33) & (20,35) & (20,37) & (20,48) & (20,49) & (20,52) \\
		(20,53) & (21,24) & (21,32) & (21,33) & (21,36) & (21,38) & (21,49) & (21,53) \\
		(22,24) & (22,34) & (22,35) & (22,38) & (22,50) & (22,51) & (22,52) & (23,34) \\
		(23,35) & (23,36) & (23,39) & (23,51) & (23,52) & (23,53) & (24,35) & (24,36) \\
		(24,37) & (24,52) & (24,53) & (25,26) & (25,27) & (25,29) & (25,30) & (25,31) \\
		(25,41) & (25,43) & (25,44) & (25,45) & (25,54) & (26,31) & (26,32) & (26,33) \\
		(26,40) & (26,45) & (26,46) & (26,54) & (26,55) & (26,56) & (27,30) & (27,33) \\
		(27,35) & (27,36) & (27,40) & (27,44) & (27,46) & (27,48) & (27,49) & (27,54) \\
		(27,55) & (27,56) & (27,57) & (27,58) & (27,59) & (28,31) & (28,36) & (28,41) \\
		(28,45) & (28,48) & (28,49) & (28,55) & (28,56) & (28,58) & (28,59) & (29,31) \\
		(29,35) & (29,37) & (29,40) & (29,45) & (29,50) & (29,51) & (29,54) & (29,57) \\
		(29,60) & (30,32) & (30,36) & (30,37) & (30,38) & (30,40) & (30,41) & (30,43) \\
		(30,48) & (30,50) & (30,51) & (30,52) & (30,54) & (30,55) & (30,57) & (30,58) \\
		(30,60) & (30,61) & (31,33) & (31,34) & (31,37) & (31,38) & (31,39) & (31,40) \\
		(31,41) & (31,42) & (31,44) & (31,46) & (31,47) & (31,49) & (31,51) & (31,52) \\
		(31,53) & (31,54) & (31,55) & (31,56) & (31,57) & (31,58) & (31,59) & (31,60) \\
		(31,61) & (31,62) & (32,35) & (32,38) & (32,39) & (32,41) & (32,42) & (32,45) \\
		(32,52) & (32,53) & (32,55) & (32,56) & (32,58) & (32,59) & (32,61) & (32,62) \\
		(33,36) & (33,39) & (33,42) & (33,48) & (33,53) & (33,56) & (33,59) & (33,62) \\
		(34,36) & (34,38) & (34,43) & (34,44) & (34,45) & (34,48) & (34,52) & (34,57) \\
		(34,58) & (34,60) & (34,61) & (35,39) & (35,44) & (35,45) & (35,47) & (35,49) \\
		(35,50) & (35,53) & (35,57) & (35,58) & (35,59) & (35,60) & (35,61) & (35,62) \\
		(36,37) & (36,45) & (36,46) & (36,51) & (36,58) & (36,59) & (36,61) & (36,62) \\
		(37,39) & (37,47) & (37,48) & (37,50) & (37,53) & (37,60) & (37,61) & (37,62) \\
		(38,48) & (38,49) & (38,51) & (38,61) & (38,62) & (39,48) & (39,49) & (39,52) \\
		(39,62) & (40,42) & (40,43) & (40,47) & (40,48) & (40,55) & (40,57) & (40,58) \\
		(41,44) & (41,46) & (41,48) & (41,49) & (41,56) & (41,57) & (41,58) & (41,59) \\
		(42,45) & (42,48) & (42,49) & (42,54) & (42,58) & (42,59) & (43,45) & (43,50) \\
		(43,51) & (43,54) & (43,57) & (43,60) & (44,48) & (44,50) & (44,51) & (44,52) \\
		(44,55) & (44,58) & (44,60) & (44,61) & (45,46) & (45,47) & (45,49) & (45,51) \\
		(45,52) & (45,53) & (45,56) & (45,59) & (45,60) & (45,61) & (45,62) & (46,48) \\
		(46,53) & (46,55) & (46,58) & (46,62) & (47,48) & (47,52) & (47,54) & (47,58) \\
		(47,61) & (48,51) & (48,54) & (48,55) & (48,56) & (48,57) & (48,60) & (49,52) \\
		(49,55) & (49,56) & (49,58) & (49,61) & (50,52) & (50,57) & (50,61) & (51,53) \\
		(51,57) & (51,58) & (51,62) & (52,57) & (52,58) & (52,59) & (52,60) & (53,58) \\
		(53,59) & (53,61) & (54,56) & (54,59) & (54,60) & (54,61) & (54,62) & (55,57) \\
		(55,61) & (55,62) & (56,58) & (56,62) & (57,59) & (57,62) & (58,60) & (59,61) \\
		(60,62) &  &  &  &  &  &  & 
	\end{longtable}
}
\vspace{4mm}
CaDiCaL output on 7-coloring:
\begin{verbatim}
c parsing ...
c done (0.294 s)
c solving ...
c done (1094.426 s)
s UNSATISFIABLE
\end{verbatim}
\vspace{4mm}

CaDiCaL output on 8-coloring:
\begin{verbatim}
c parsing ...
c done (0.446 s)
c solving ...
c done (0.529 s)
s SATISFIABLE
\end{verbatim}
\vspace{4mm}
\end{document}